\documentclass[12pt,leqno,fleqn]{amsart}  
\usepackage{amsmath,amstext,amsthm,amssymb,amsxtra}
\usepackage{txfonts} 
\usepackage[T1]{fontenc}
\usepackage{lmodern}

 \usepackage{euler}   

\usepackage{tikz}

\usepackage{mathtools}
\mathtoolsset{showonlyrefs,showmanualtags}

\usepackage{hyperref} 
\hypersetup{
    colorlinks=true,       
    linkcolor=blue,          
    citecolor=magenta,        
    filecolor=magenta,      
    urlcolor=cyan           
}

\usepackage[]{amsrefs}

\setlength{\textwidth}{16.6cm}
\setlength{\topmargin}{0cm}
\setlength{\oddsidemargin}{0cm}
\setlength{\evensidemargin}{0cm}
\allowdisplaybreaks
\swapnumbers


\theoremstyle{plain} 
\newtheorem{lemma}[equation]{Lemma} 
\newtheorem{proposition}[equation]{Proposition} 
\newtheorem{theorem}[equation]{Theorem}

\theoremstyle{definition}
\newtheorem{definition}[equation]{Definition} 

\theoremstyle{remark}
\newtheorem{remark}[equation]{Remark}

\numberwithin{equation}{section}

%

%

%
%
%

%
%
%
%
%
%

\title[Local $ Tb$ \& Martingale Transforms]{The Perfect Local $ Tb$ Theorem and Twisted Martingale Transforms} 
\author[MT Lacey]{Michael T. Lacey}   

\address[M.T.L]{School of Mathematics, Georgia Institute of Technology, Atlanta GA 30332, USA}
\email {lacey@math.gatech.edu}
\thanks{Research supported in part by grant NSF-DMS 0968499,  a grant from the Simons Foundation (\#229596 to Michael Lacey).
} 

\author[AV V\"ah\"akangas]{Antti V. V\"ah\"akangas}
\address[A.V.V]{School of Mathematics, Georgia Institute of Technology, Atlanta GA 30332, USA} \email{antti.vahakangas@helsinki.fi}
  \thanks{A.V.V.  was
supported by the School of Mathematics, Georgia Institute of Technology, and 
by the Finnish Academy of Science and Letters, Vilho, Yrj\"o and
Kalle V\"ais\"al\"a Foundation.
}

\begin{document}

	\maketitle

\begin{abstract} 
A local $ Tb$ Theorem provides a flexible framework for proving the boundedness of a Calder\'on-Zygmund operator $ T$. 
One needs only boundedness of the operator $ T$ on systems  of locally pseudo-accretive functions $ \{b_Q\}$, indexed by cubes.  
We give a new proof of this  Theorem in the setting of  perfect (dyadic) models of Calder\'on-Zygmund operators, imposing 
integrability conditions on the $ b_Q$ functions that  are the weakest 
possible.  The proof is a simple direct argument, based upon an   inequality for  transforms of so-called  twisted 
martingale differences,   which has been noted by Auscher-Routin. 
\end{abstract}

\section{Introduction} 

An operator $ T$ is said to be a  \emph{perfect Calder\'on-Zygmund operator} 
if it satisfies these conditions.  There is a kernel $ K (x,y)$ so that 
\begin{equation*}
	\langle T f, g \rangle= \int_{\mathbb R^{n}} \int_{\mathbb R^{n}} K (x,y) f (y) g (x) \; dy dx 
\end{equation*}
for all $ f,g$ that are smooth, compactly supported, and the closure of the supports of $ f$ and $ g$ do not intersect.  
The kernel $ K (x,y)$ is assumed to satisfy the size condition 
\begin{equation} \label{e.size}
	\lvert 	K (x,y)\rvert \le \frac 1 {\lvert  x-y\rvert ^{n} } 
\end{equation}
and it satisfies the following strong smoothness condition.  For any two disjoint dyadic cubes $ P,Q$, $ K (x,y)$ is constant on $ P \times Q$. 
The implication of this property, used repeatedly, is this: If $ f$ is supported on $ P$ and $ g$ is supported on $ Q$, and at least 
one of $ f$ and $ g$ have integral zero, then $	\langle T f, g \rangle=0 $.

We are interested in bounded Calder\'on-Zygmund operators, so we set $ \mathbf T$ to be the  norm of $ T$ on $ L^2 (\mathbb R ^{n})$, 
namely $ \mathbf T$ is the best constant in the inequality 
\begin{equation*}
	\bigl\lvert 	\langle T f, g \rangle\bigr\rvert \le \mathbf T \lVert f\rVert_{2} \lVert g\rVert_{2} \,. 
\end{equation*}
It is well known that this inequality  extends to
the form $\bigl\lvert 	\langle T f, g \rangle\bigr\rvert \lesssim \mathbf T \lVert f\rVert_{p} \lVert g\rVert_{p'}$, 
where $1<p<\infty$ and $1/p+1/p'=1$.

The celebrated $ T1$ Theorem of David and Journ\'e \cite{MR763911} gives a beautiful characterization of the bounded 
Calder\'on-Zygmund operators.  It was the powerful observation of Michael Christ \cite{MR1096400} that a 
weakening of the $ T1$ Theorem, to a so-called $ Tb$ formulation, 
can have wide-ranging implications.  Christ himself addressed analytic capacity, and it 
was this perspective that was crucial to the solution of the Kato square root problem \cites{MR1933725,MR1933726}.  
Our focus is on the local $ Tb$, in the dyadic model, as promoted in \cite{MR1934198}. 
  This is the usual definition of systems  of accretive functions.

\begin{definition}\label{d.system}
	Fix $ 1<p < \infty $.   
	A collection of functions $ \{b _Q \;:\; Q\in \mathcal D\} $ is  called \emph{a system of $p$-accretive functions with constant $ 1< \mathbf A$} if these conditions hold for each dyadic cube $ Q\in\mathcal{D}$. 
\begin{enumerate}
\item  $ b_Q$ is supported ond $ Q$ and $ \int _{Q} b_Q (x) \; dx = \lvert  Q\rvert $. 
\item  $ \lVert b_Q\rVert_{p} \le \mathbf A|Q|^{1/p}$. 
\end{enumerate}
\end{definition}

In the Theorem below  $ p_1, p_2$ are not related by duality;  for instance it is allowed that $ 1< p_1,p_2 < 2$. 

\begin{theorem}\label{t.main}  
For  fixed constants 
	$ \mathbf A$ and $\mathbf{T}_{\textup{loc}}$, this holds.  
	Suppose that $ T$ is a perfect dyadic Calder\'on-Zygmund operator and,
	for $ 1< p_1, p_2< \infty $, there
	are systems $ \{b ^{j} _{Q}\}$ of $p_j$-accretive functions with constant $ \mathbf A$, 
	so that 
	\begin{equation*}
		\int _{Q} \lvert  T b_Q^1\rvert ^{p_2'} \; dx  \le \mathbf T ^{p_2'} _{\textup{loc}} \lvert  Q\rvert \,, 
		\qquad 
		\int _{Q} \lvert  T^* b_Q^2\rvert ^{p_1'} \; dx  \le \mathbf T ^{p_1'} _{\textup{loc}}\lvert  Q\rvert.
\end{equation*}
Then, $ T$ extends to a bounded operator on $ L ^2 $, and moreover, 
$ \mathbf T \lesssim_{\mathbf{A},p_1,p_2} 1+   \mathbf T _{\textup{loc}}$.
\end{theorem}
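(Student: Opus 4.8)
The plan is to use the standard $T1$-style decomposition into paraproducts and a ``good-$\lambda$'' / stopping-time argument, but organized around the twisted martingale differences associated to the accretive systems $\{b^1_Q\}$ and $\{b^2_Q\}$. Fix test functions $f,g$ smooth and compactly supported. Rather than expanding them in the usual Haar basis, I would expand $f$ in a system adapted to $b^1$ and $g$ in a system adapted to $b^2$. Concretely, for each $Q\in\mathcal D$ one forms the ``twisted'' martingale difference operator $\Delta_Q^{b^1}$, which on $Q$ subtracts a $b^1$-weighted average on $Q$ from the $b^1$-weighted averages on the children of $Q$; because $\int_Q b^1_Q = |Q|$, these are genuine projections summing to the identity (in the appropriate sense, after suitable truncation), and the key point noted by Auscher--Routin is that the transform $\sum_Q \epsilon_Q \Delta_Q^{b^1}$ is bounded on $L^p$ with a bound depending only on $\mathbf A$ and $p$, for any choice of signs. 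I would state and use this twisted martingale transform inequality as the main technical input.

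Next I would write $\langle Tf,g\rangle = \sum_{Q,R} \langle T\Delta_Q^{b^1} f, \Delta_R^{b^2} g\rangle$ and split the sum according to the relative position and size of $Q$ and $R$: the ``diagonal'' terms $Q=R$ (or $Q,R$ comparable), the terms with $Q\subsetneq R$, and the symmetric terms with $R\subsetneq Q$. The perfect dyadic hypothesis makes this split extremely clean: whenever $Q$ and $R$ are disjoint the pairing vanishes, and whenever one is strictly contained in the other, the cancellation of the martingale difference on the smaller cube together with the fact that $K$ is constant on products of disjoint dyadic cubes collapses $T$ acting on the larger-scale piece to a single average. So, for instance, in the case $Q\subsetneq R$ the term $\langle T\Delta_Q^{b^1} f,\Delta_R^{b^2} g\rangle$ reduces to $\langle (\text{avg of }T b^1_{Q'}\text{ on }Q)\cdot \Delta_Q^{b^1}f\,,\,\Delta_R^{b^2} g\rangle$ type expressions, where $Q'$ is the appropriate ancestor; this is exactly a paraproduct against the function whose ``Carleson norm'' is controlled by the testing hypothesis $\int_Q |Tb^1_Q|^{p_2'}\,dx \le \mathbf T_{\mathrm{loc}}^{p_2'}|Q|$. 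The diagonal terms are handled directly: $\langle T\Delta_Q^{b^1} f,\Delta_Q^{b^2} g\rangle$ is bounded by $\|Tb^1_Q\|_{L^{p_2'}(Q)}$ times $\|\Delta_Q^{b^2} g\|_{p_2}$ up to the coefficient of $f$, and summing over $Q$ uses Cauchy--Schwarz/Hölder together with the two twisted square function (or transform) estimates.

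Assembling the pieces, I would bound each of the three sums by a constant times $(1+\mathbf T_{\mathrm{loc}})\|f\|_2\|g\|_2$. The paraproduct sums are controlled by a Carleson embedding theorem: the relevant Carleson measure is $\sum_Q |Q|^{-1}\big(\int_Q |Tb^1_Q|^{p_2'}\big)^{?}\,\delta_Q$ — more precisely one shows the sequence of averages of $|Tb^1_Q|^{p_2'}$ satisfies a Carleson packing condition with constant $\mathbf T_{\mathrm{loc}}^{p_2'}$, hence embeds $L^{p_2}\hookrightarrow \ell^{p_2}$ along the martingale differences — and then one pairs this with the twisted martingale transform bound on the $g$ side. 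The one place requiring genuine care, and the step I expect to be the main obstacle, is that $p_1$ and $p_2$ are \emph{not} dual, so the naive Cauchy--Schwarz pairing of an $L^{p_1}$ square function of $f$ against an $L^{p_2}$ square function of $g$ is not available; one must instead exploit that $f,g\in L^2$ and interpolate, or route the estimates through $L^2$ using that the twisted transforms are bounded on \emph{all} $L^r$, $1<r<\infty$, so one can afford to put the ``bad'' exponent on the testing function (where the hypothesis lives) and keep $L^2$ on $f$ and $g$. Making the bookkeeping of exponents consistent across the diagonal and the two paraproduct sums — in particular verifying that the Carleson/embedding constant really does come out as a power of $\mathbf T_{\mathrm{loc}}$ with the dualities matching — is the delicate part; everything else is the now-standard perfect-dyadic $Tb$ machinery.
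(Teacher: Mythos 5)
Your framework is right at the level of strategy — twisted martingale differences, a diagonal/off-diagonal split, the perfect-dyadic collapse — but the proposal has two genuine gaps, and the second is the real one.

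First, you never construct the corona. The paper does not use the twisted difference adapted to $b^1_Q$ for each individual $Q$; the denominators $\langle b^1_Q\rangle_{Q'}$ would then be uncontrolled and the twisted transform inequality would fail. One must first reduce to $|f|=|g|=\mathbf 1_{Q_0}$, then run a stopping-time construction producing Carleson families $\mathcal S_1,\mathcal S_2$ (stopping when the accretive average degenerates, when $|b^j_S|^{p_j}$ blows up in average, or when the testing integral blows up in average), and then define $\Delta^j_Q$ using the single function $b^j_{\pi_{\mathcal S_j}Q}$ from the corona top. This is what keeps the ratios bounded and what makes Theorem~\ref{t.mt} applicable. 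Note also that Theorem~\ref{t.mt} is only stated for the exponent $p$ attached to $b$, not for all $1<r<\infty$; it is the half-twisted variant (Theorem~\ref{t.LM}) that works for all exponents.

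Second, and more importantly, you correctly identify the non-duality of $p_1,p_2$ as the obstruction but then propose to resolve it by interpolation or by routing through $L^2$. That is not how the argument goes, and I do not see how to make it work directly. The paper's resolution is to avoid putting $f$ into a square function at all. In the term $B_{\mathrm{above}}$, fix a stopping cube $S\in\mathcal S_1$; the perfect property collapses $\langle T(b^1_S\widetilde\Delta_P f),\Delta_Q g\rangle$ to $\langle\widetilde\Delta_P f\rangle_{P_Q}\langle Tb^1_S,\Delta_Q g\rangle$. The coefficients $\langle\widetilde\Delta_P f\rangle_{P_Q}$ \emph{telescope} over $P$ with $\pi_{\mathcal S_1}P=S$ to bounded numbers $\varepsilon_Q$ — boundedness comes from the stopping conditions and $|f|=\mathbf 1_{Q_0}$. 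The whole corona block then becomes the single pairing $\langle Tb^1_S,\sum_{Q\subsetneq S}\varepsilon_Q\Delta_Q g\rangle$, which is handled by H\"older with conjugate exponents $p_2'$ (for the testing hypothesis on $Tb^1_S$) and $p_2$ (for the twisted martingale transform bound on $g$). No cross-pairing of $p_1$ against $p_2$ ever occurs; the exponents $p_1,p_2$ live entirely on separate sides of the argument, one governing the $f$-side stopping conditions and one the $g$-side transform. That telescoping-to-bounded-coefficients step is the idea your proposal is missing, and without it the paraproduct estimate does not close.
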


This is a known result, \cite[Theorem 6.8]{MR1934198}. 
Auscher and Routin  \cite[Section 8]{1011.1747} have recently devised a  proof  closely related to this one.  

Martingale transform inequality for twisted differences play the central role. 
These inequalities have also been used by Auscher-Routin \cite{1011.1747}*{Section 5}. 
The direct proof of the  the Theorem  proceeds by standard reductions, and  construction of stopping cubes from the local $ Tb$ hypotheses, and  a brief additional argument. 
A highlight is a simple appeal to the local $ Tb$ hypothesis and the martingale transform inequality. 
Compare to \cite{1011.1747}*{Estimate for $ \langle f, V _{1,2}g \rangle$}.  

Relevant history, and indications of the utility of $ Tb$ theorems can be found in surveys by S. Hofmann \cites{MR2275650,MR2664559}. 
See in particular  \cite{MR2664559}*{\S3.3.1}, where the extension of the Theorem above to the continuous case is specifically mentioned. 
The perfect case is of course very special, still the argument in \cite{MR1934198} has been influential, although 
the task of lifting the proof therein to the continuous case has not proven to be easy.  
Auscher and  Yang \cite{MR2474120} succeeded in extending the Theorem above to the continuous case, 
with the duality assumption on $ p_1$ and $ p_2$  but the argument 
is an indirect reduction to the perfect case.  This is less desirable, due to the interest in local $ Tb$ theorems 
more general settings, such as the setting of homogeneous spaces, as in  Auscher and Routin \cite{1011.1747}.  
The latter paper employs the  Belykin-Coifman-Rohklin algorithm, see \cite{MR1085827,MR1110189}.
The latter paper addresses the  the case where $ 1/p_1 + 1/p_2 > 1$,  but  additional hypotheses are needed, and their nature is still unresolved.    
 One can also consult  Hyt\"onen-Martikainen   \cites{0911.4387,1011.0642} for another general approach to local $ Tb$ Theorem in non-homogeneous and upper doubling settings, although in the setting where duality is imposed.  
A local $Tb$ Theorem  in a vector-valued setting,
with strong conditions on accretive functions,
is considered in \cite{1201.0648}. 
Salamone \cite{MR2666228} also studies the dyadic $ Tb$ Theorem.  

Notation: For any cube $ Q$, $ \langle f  \rangle_Q := \lvert  Q\rvert ^{-1} \int _{Q} f \; dx  $, 
and $ \ell Q = \lvert  Q\rvert ^{1/n} $ is the side length of the cube. $ A \lesssim B$ means that 
$ A \le C \cdot B$, where $ C$ is an unspecified constant independent of $ A$ and $ B$.

\section{The Martingale Transform Inequality} 

The  classical martingale transform inequality is this. 
For all constants satisfying $\lvert\varepsilon_Q\rvert\le 1$,
\begin{equation}\label{e.classical_martingale}
	\Bigl\lVert  \sum_{Q\in \mathcal D} \varepsilon _Q \sum_{Q'\in\textup{ch}(Q)}
	\big\{\langle f\rangle_{Q'}-\langle f\rangle_{Q}\big\}\mathbf{1}_{Q'}
	 \Bigr\rVert_{q}  \lesssim
	 \lvert\lvert f\rvert\rvert_{q},\quad 1<q<\infty.
\end{equation}
A variant  is stated in Theorem \ref{t.mt}, and it is essential to the  subsequent arguments.  
This section can be read independently of the rest of the paper.
Above, and for the remainder of the paper, $ \langle f \rangle_Q := \lvert  Q\rvert ^{-1} \int _{Q} f \; dx $ 
is the average of $ f$ on cube $ Q$.

Fix a function $ b$ supported on a dyadic cube $ S_0$, satisfying $ \int b \; dx = \lvert  S_0\rvert $, and 
$ \lVert b\rVert_{p} \le \mathbf A\lvert  S_0\rvert ^{1/p} $, where $ 1<p< \infty $ is fixed. 
Let $ \mathcal T'$ be the maximal dyadic cubes $ T\subset S_0$ which 
meet either one of these conditions with $\delta\in (0,1)$:
\begin{equation}\label{e.ra}
	\Bigl\lvert \int _{T} b \; dx  \Bigr\rvert \le \delta   \lvert  T\rvert\, \quad \textup{or} \quad  
	\int _{T} \lvert b\rvert ^{p}\,dx \ge \delta^{-1}  \mathbf A ^{p}\lvert  T\rvert\,.  
\end{equation}
We will consider a fixed but arbitrary family $\mathcal{T}$ of
disjoint dyadic cubes in $S_0$, the `terminal cubes', and our estimates are not allowed depend upon this family.
Moreover, we require that $T'\subset T\in\mathcal{T}$ if $T'\in\mathcal{T}'$.
To each terminal cube $ T$, we have a function $ b_T$ supported on $ T$, and  satisfying 
$\int b_T\; dx=|T|$ and $ \lVert b_T\rVert_{p} \le \mathbf A|T|^{1/p}$.

Let $ \mathcal Q$ be all dyadic cubes, contained in $S_0$, but not contained in any $ T\in \mathcal T$. Define 
\begin{equation*}
	\Delta _{Q} f := 
		\sum_{Q'\in \textup{ch}(Q)} 
	\Bigl[
	\frac {\langle f \rangle _{Q'}} {\langle b_{Q'} \rangle _{Q'} } b _{Q'}
	-
	\frac {\langle f \rangle _{Q}} {\langle b \rangle_Q}  b 
	\Bigr]  \mathbf 1_{Q'} \,, \qquad Q\in \mathcal Q\,, 
\end{equation*}
where we set $ b _{Q'} = b$ if $ Q'\not\in \mathcal T$ and otherwise, 
$ b_{Q'}$ is defined as above.  We refer to these as the \emph{twisted martingale differences.}

\begin{theorem}\label{t.mt} 
	This inequality holds for all selection of constants $ \lvert  \varepsilon _Q\rvert \le1$. 
\begin{equation}\label{e.mt}
	\Bigl\lVert  \sum_{Q\in \mathcal Q} \varepsilon _Q \Delta _Q f \Bigr\rVert_{p}  \lesssim   \lVert f\rVert_{p}\,, 
\end{equation}
where $1<p<\infty$ is the exponent associated with the function $b$.
\end{theorem}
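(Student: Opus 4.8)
The plan is to recognize the left-hand side of \eqref{e.mt}, after an Abel summation over the tree $\mathcal Q$, as a genuine \emph{weighted} martingale transform, for which Burkholder's inequality applies, the conditions \eqref{e.ra} providing exactly the Muckenhoupt $A_p$ bound that is needed.

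After a routine limiting argument we may assume $\mathcal Q$ is finite (truncate at a fixed generation, adjoining the minimal cubes to $\mathcal T$), and that $f$ is bounded, real, and supported on $S_0$. Set $\pi_Q:=\langle f\rangle_Q\langle b_Q\rangle_Q^{-1}b_Q\mathbf 1_Q$ for $Q\in\mathcal Q\cup\mathcal T$ (so $b_Q=b$, $\langle b_Q\rangle_Q=\langle b\rangle_Q$ when $Q\in\mathcal Q$). A direct computation gives $\Delta_Q f=\sum_{Q'\in\textup{ch}(Q)}\pi_{Q'}-\pi_Q$, whence, writing $\widehat R$ for the dyadic parent of $R$,
\[
	\sum_{Q\in\mathcal Q}\varepsilon_Q\Delta_Q f
	=-\varepsilon_{S_0}\pi_{S_0}
	+\sum_{R\in\mathcal Q\setminus\{S_0\}}(\varepsilon_{\widehat R}-\varepsilon_R)\,\pi_R
	+\sum_{T\in\mathcal T}\varepsilon_{\widehat T}\,\pi_T.
\]
The first term satisfies $\lVert\pi_{S_0}\rVert_p\le\delta^{-1}\mathbf A\lVert f\rVert_p$ by Jensen's inequality; the terminal sum has $L^p$ norm $\bigl(\sum_{T}\lvert\langle f\rangle_T\rvert^p\lVert b_T\rVert_p^p\bigr)^{1/p}\le\mathbf A\bigl(\sum_T\int_T(M_{\mathcal D}f)^p\bigr)^{1/p}\lesssim_p\mathbf A\lVert f\rVert_p$, since the $b_T$ have pairwise disjoint supports ($M_{\mathcal D}$ is the dyadic maximal operator).

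It remains to treat the middle sum. For $R\in\mathcal Q$ one has $\lvert\langle b\rangle_R\rvert>\delta$, so $m_R:=\langle f\rangle_R\langle b\rangle_R^{-1}$ is defined and $\pi_R=m_R\,b\mathbf 1_R$; hence the middle sum equals $b\cdot\Psi$, $\Psi:=\sum_{R\in\mathcal Q\setminus\{S_0\}}(\varepsilon_{\widehat R}-\varepsilon_R)m_R\mathbf 1_R$. Undoing the Abel summation along each dyadic chain, $\Psi$ is a martingale transform of the sequence $(m_Q)_{Q\in\mathcal Q}$ plus two endpoint terms, each of the form $\sum m_R\mathbf 1_R$ over a pairwise disjoint subfamily of $\mathcal Q$; on these $\lvert m_R\rvert\le\delta^{-1}M_{\mathcal D}f$ and $\int_R\lvert b\rvert^p\le\delta^{-1}\mathbf A^p\lvert R\rvert$, so after multiplication by $b$ they are again dominated in $L^p$ by $\lVert M_{\mathcal D}f\rVert_p$. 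When $b>0$, $(m_Q)_{Q\in\mathcal Q}$ is the conditional-expectation sequence of $f/b$ for the measure $\mu:=b\,dx$ (with the dyadic filtration stopped on $\mathcal T$), so $b\,\Psi$ is, modulo those errors, $b$ times the $\mu$-martingale transform of $f/b$; since
\[
	\lVert b\,\Psi\rVert_p^p=\int\lvert\Psi\rvert^p b^{p-1}\,d\mu
	\quad\text{and}\quad
	\lVert f\rVert_p^p=\int\lvert f/b\rvert^p b^{p-1}\,d\mu,
\]
the matter reduces to the boundedness of the $\mu$-martingale transform on $L^p(w\,d\mu)$ with $w:=b^{p-1}$. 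Here $[w]_{A_p(\mu)}=\sup_Q\langle b^p\rangle_Q\langle b\rangle_Q^{-p}$, and on every $Q\in\mathcal Q$ the hypotheses defining $\mathcal T'\subset\mathcal T$ give $\langle b^p\rangle_Q<\delta^{-1}\mathbf A^p$ and $\langle b\rangle_Q>\delta$; as the stopped transform only involves cubes of $\mathcal Q$, $[w]_{A_p(\mu)}\lesssim_{\mathbf A,\delta}1$, and the weighted Burkholder inequality finishes the proof.

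For complex or sign-changing $b$ one factors $b=\sigma\lvert b\rvert$ with $\lvert\sigma\rvert\equiv1$ and runs the same argument with $\mu:=\lvert b\rvert\,dx$: now $\lvert b\rvert$ is a weight with $\langle\lvert b\rvert^p\rangle_Q\langle\lvert b\rvert\rangle_Q^{-p}\lesssim_{\mathbf A,\delta}1$ on $\mathcal Q$ (using $\langle\lvert b\rvert\rangle_Q\ge\lvert\langle b\rangle_Q\rvert>\delta$), while $\sigma\in L^\infty(\mu)$ is accretive for $\mu$; the twisted difference then unwinds into a $\mu$-martingale transform and a paraproduct against $\sigma$, the latter bounded because the bounded $\mu$-martingale generated by $\sigma$ has Carleson square function. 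The crux — and the only place where the conditions \eqref{e.ra} on $b$ are genuinely used — is precisely this passage from the algebraic identity to the weighted martingale transform and paraproduct estimates: \eqref{e.ra} is exactly what forces the relevant weight into $A_p$ with a controlled constant. If one wants a self-contained proof, that weighted martingale transform bound is in turn established by a short corona decomposition adapted to the weight; the remaining pieces, the endpoint and terminal terms, are all absorbed by the maximal function.
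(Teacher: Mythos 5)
The proposal takes a genuinely different route.  The paper's proof decomposes $\sum\varepsilon_Q\Delta_Q f = (\sum\varepsilon_Q D_Q f)\cdot b+\text{(terminal pieces)}$, proves the Lebesgue-measure bound $\|\sum\varepsilon_Q D_Q f\|_q\lesssim\|f\|_q$ for \emph{all} $q$ via a three-term splitting and the perfect-dyadic $T1$ theorem (Theorem~\ref{t.LM}), and then passes from Lebesgue measure to $|b|^p\,dx$ by a good-$\lambda$ argument exploiting that $Bf$ is constant on terminal cubes and $|b|^p\,dx$ is comparable to $dx$ on level sets.  You instead perform an Abel summation (in both directions) to exhibit the same operator, for positive $b$, as $b$ times a martingale transform of $f/b$ with respect to $\mu=b\,dx$, plus boundary and terminal pieces controlled by the dyadic maximal function; the change of variables $\|b\Psi\|_{L^p(dx)}^p=\int|\Psi|^p b^{p-1}\,d\mu$ then reduces matters to the boundedness of the $\mu$-martingale transform on $L^p(w\,d\mu)$, $w=b^{p-1}$, and the stopping conditions \eqref{e.ra} give precisely $[w]_{A_p(\mu)}\lesssim_{\mathbf A,\delta}1$ on $\mathcal Q$.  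This is conceptually illuminating -- it makes explicit that \eqref{e.ra} is an $A_p$ condition -- and the reduction is correct for real positive $b$.  What the paper's argument buys is self-containment (only the perfect $T1$ theorem and the elementary good-$\lambda$ step are used, which fits the spirit of a paper on $Tb$ theorems), and a clean treatment of complex $b$; what yours buys is a sharper structural insight.

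That said, two steps in your proposal need more than a sentence.  First, the weighted Burkholder inequality you invoke is for the full dyadic filtration, while your $A_p(\mu)$ bound is verified only on $Q\in\mathcal Q$; one has to stop the filtration at $\mathcal T$ (or modify $\mu$ and $w$ below $\mathcal T$ to extend the $A_p$ bound to all cubes) before applying the theorem.  You gesture at this but it deserves a line of proof.  Second and more seriously, the reduction for sign-changing or complex $b$ is genuinely incomplete.  Writing $b=\sigma|b|$ and $\mu=|b|\,dx$ gives $m_Q=\langle f/|b|\rangle_Q^\mu / \langle\sigma\rangle_Q^\mu$, so the martingale differences $m_{Q'}-m_Q$ split into a $\mu$-martingale transform of $f/|b|$ with bounded multipliers $\varepsilon_Q/\langle\sigma\rangle_Q^\mu$ \emph{and} a paraproduct against the $\mu$-martingale differences of $\sigma$.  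The latter must be bounded on the \emph{weighted} space $L^p(|b|^{p-1}\,d\mu)$, which does not follow just from $\sigma$ being bounded and generating a Carleson square function for $\mu$ -- a weighted Carleson embedding or paraproduct estimate is needed, and proving it is comparable in effort to the parts of Theorem~\ref{t.LM} that you are trying to bypass.  As written, the general case is a sketch, not a proof.
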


This Theorem and Theorem~\ref{t.LM} below are contained in \cite{1011.1747}*{Lemma 5.3}.  
A randomized version of this Theorem in a vector-valued context is proven
in \cite[Section 4]{1201.0648}. And, the more common square function variant is well-known. 
\color{black}
We give a somewhat different proof, in the spirit of completeness, since we view the inequality 
as fundamental to the  $ Tb$ theorems. \color{black}

We need the following preparation.  In the sum below, we do not sum over the children which are 
terminal cubes, and we do not multiply by the $ b$ functions, and so we refer to these as the \emph{half-twisted differences}. 
\begin{equation} \label{e.D}
D_{Q} f := 
\sum_{Q'\in \textup{ch}(Q)\setminus \mathcal T} 
	\Bigl[
	\frac {\langle f \rangle _{Q'}} {\langle b \rangle _{Q'}   } 
	-
	\frac {\langle f \rangle _{Q}} {\langle b \rangle_Q}  
	\Bigr]  \mathbf 1_{Q'} \,, \qquad Q\in \mathcal Q\,. 
\end{equation}
The following universal estimate holds in Lebesgue measure. 

\begin{theorem}\label{t.LM} 
	These inequalities hold for all selection of constants $ \lvert  \varepsilon _Q\rvert \le1$. 
	\begin{equation}\label{e.lm}
	\Bigl\lVert  \sum_{Q\in \mathcal Q} \varepsilon _Q  D _Q f \Bigr\rVert_{q} \lesssim   \lVert f\rVert_{q} 
	\,, \qquad 1< q < \infty \,. 
\end{equation}
\end{theorem}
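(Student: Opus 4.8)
The plan is to expand each half-twisted difference $D_Qf$ by a Leibniz-type identity, so that $\sum_{Q\in\mathcal Q}\varepsilon_QD_Qf$ breaks into a \emph{genuine} martingale transform of $f$ -- to which \eqref{e.classical_martingale} applies -- plus a dyadic paraproduct whose symbol is built from the averages $1/\langle b\rangle_Q$. Two elementary observations are used throughout. First, every $R\in\mathcal Q$ fails \emph{both} alternatives in \eqref{e.ra}: the maximal subcubes of $R$ realizing either alternative belong to some $T'\in\mathcal T'$, hence lie in a terminal cube and are excluded from $\mathcal Q$; so $\delta<\lvert\langle b\rangle_R\rvert\le\langle\lvert b\rvert^{p}\rangle_R^{1/p}<\delta^{-1/p}\mathbf A$ for every $R\in\mathcal Q$, and averaging over the parent gives $\lvert\langle b\rangle_{Q'}\rvert\lesssim 1$ for every child $Q'$ of a cube of $\mathcal Q$. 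Second, $D_Qf$ depends on $f$ only through averages of $f$ over cubes contained in $S_0$, so we may assume $f=f\mathbf 1_{S_0}$.

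For $Q'\in\textup{ch}(Q)\setminus\mathcal T$ (so $Q',Q\in\mathcal Q$) one has the identity
\begin{equation*}
\frac{\langle f\rangle_{Q'}}{\langle b\rangle_{Q'}}-\frac{\langle f\rangle_{Q}}{\langle b\rangle_{Q}}
=\frac{\langle f\rangle_{Q'}-\langle f\rangle_{Q}}{\langle b\rangle_{Q}}
+\langle f\rangle_{Q'}\Bigl(\frac1{\langle b\rangle_{Q'}}-\frac1{\langle b\rangle_{Q}}\Bigr),
\end{equation*}
which splits $\sum_{Q\in\mathcal Q}\varepsilon_QD_Qf=A+B$. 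In $A$ the multiplier $\varepsilon_Q/\langle b\rangle_Q$ is indexed by the \emph{parent} $Q$ and is bounded by $\delta^{-1}$; restoring the terminal children, $A$ equals the martingale transform $\sum_{Q\in\mathcal Q}\tfrac{\varepsilon_Q}{\langle b\rangle_Q}\sum_{Q'\in\textup{ch}(Q)}(\langle f\rangle_{Q'}-\langle f\rangle_Q)\mathbf 1_{Q'}$, which is bounded on $L^{q}$ by \eqref{e.classical_martingale}, minus the correction $\sum_{T\in\mathcal T}\tfrac{\varepsilon_{\widehat T}}{\langle b\rangle_{\widehat T}}(\langle f\rangle_{T}-\langle f\rangle_{\widehat T})\mathbf 1_{T}$, where $\widehat T\in\mathcal Q$ denotes the dyadic parent of $T$. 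The latter runs over the pairwise disjoint cubes $T$, so with $Mf$ the Hardy--Littlewood maximal function its $L^{q}$ norm to the power $q$ is at most $\lesssim\sum_{T\in\mathcal T}\int_{T}(Mf)^{q}\,dx\lesssim\lVert f\rVert_{q}^{q}$ for $1<q<\infty$, since $\lvert\langle f\rangle_{T}\rvert+\lvert\langle f\rangle_{\widehat T}\rvert\le 2Mf(x)$ on $T$.

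The term $B=\sum_{Q\in\mathcal Q}\varepsilon_Q\sum_{Q'\in\textup{ch}(Q)\setminus\mathcal T}\langle f\rangle_{Q'}\bigl(\langle b\rangle_{Q'}^{-1}-\langle b\rangle_{Q}^{-1}\bigr)\mathbf 1_{Q'}$ is a dyadic paraproduct of $f$ against the bounded adapted process $R\mapsto 1/\langle b\rangle_R$ on $\mathcal Q$; the only input about $b$ that enters is the Carleson estimate
\begin{equation*}
\sup_{R_0}\ \frac1{\lvert R_0\rvert}\sum_{R\subseteq R_0,\ R\in\mathcal Q}\bigl\lvert\langle b\rangle_R-\langle b\rangle_{\widehat R}\bigr\rvert^{2}\lvert R\rvert\ \lesssim\ 1 ,
\end{equation*}
the supremum over all dyadic $R_0$ (this suffices because $\lvert\langle b\rangle_{Q'}^{-1}-\langle b\rangle_{Q}^{-1}\rvert\le\delta^{-2}\lvert\langle b\rangle_{Q'}-\langle b\rangle_{Q}\rvert$). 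I would prove it by introducing the function $b_{0}$ equal to $b$ on $S_0\setminus\bigcup\mathcal T$ and to $\langle b\rangle_T$ on each $T\in\mathcal T$: the first observation gives $\lVert b_{0}\rVert_{\infty}\lesssim 1$ (on $S_0\setminus\bigcup\mathcal T$, $\lvert b\rvert$ is a.e.\ a limit of $\lvert\langle b\rangle_{R}\rvert$ with $R\in\mathcal Q$ by the Lebesgue differentiation theorem), while each $R\in\mathcal Q$ is partitioned by $R\setminus\bigcup\mathcal T$ and the terminal cubes it contains, on each of which $b_{0}$ and $b$ have equal integrals, so $\langle b_{0}\rangle_R=\langle b\rangle_R$ for every $R\in\mathcal Q$ and every child of such an $R$. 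Hence the dyadic martingale differences of $b$ and $b_{0}$ agree on cubes of $\mathcal Q$, so for any dyadic $R_0$ the sum above is
\begin{equation*}
\le\ \sum_{P\subseteq R_0}\ \sum_{P'\in\textup{ch}(P)}\bigl\lvert\langle b_{0}\rangle_{P'}-\langle b_{0}\rangle_{P}\bigr\rvert^{2}\lvert P'\rvert\ =\ \bigl\lVert(b_{0}-\langle b_{0}\rangle_{R_0})\mathbf 1_{R_0}\bigr\rVert_{2}^{2}\ \lesssim\ \lVert b_{0}\rVert_{\infty}^{2}\lvert R_0\rvert .
\end{equation*}
With the symbol thus placed in dyadic $\mathrm{BMO}$ with controlled norm, the $L^{q}$ boundedness of $B$ for all $1<q<\infty$ follows from the classical $L^{q}$ theory of dyadic paraproducts, organizing $B$ into its principal paraproduct part, its diagonal part, and a remainder supported on the dropped terminal children, the last again absorbed into $Mf$.

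The main obstacle is the paraproduct term $B$, and with it the $\mathrm{BMO}$ estimate above. Unlike in \eqref{e.classical_martingale}, the twisted differences are genuinely not martingale differences of any natural function -- indeed $\langle f/b\rangle_Q\neq\langle f\rangle_Q/\langle b\rangle_Q$ -- so Burkholder's inequality is unavailable and $B$ must be treated as a paraproduct on its own terms, and organized with some care, lest a naive splitting of $\langle f\rangle_{Q'}$ introduce spurious divergences. It is precisely in establishing the $\mathrm{BMO}$ bound, through the bounded function $b_{0}$, that the stopping conditions \eqref{e.ra} are used; everything else reduces to \eqref{e.classical_martingale}, the Hardy--Littlewood maximal inequality, and the standard boundedness of dyadic paraproducts.
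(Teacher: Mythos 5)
Your two\mbox{-}term split $\tfrac{\langle f\rangle_{Q'}}{\langle b\rangle_{Q'}}-\tfrac{\langle f\rangle_{Q}}{\langle b\rangle_{Q}}=\tfrac{\langle f\rangle_{Q'}-\langle f\rangle_{Q}}{\langle b\rangle_{Q}}+\langle f\rangle_{Q'}\bigl(\langle b\rangle_{Q'}^{-1}-\langle b\rangle_{Q}^{-1}\bigr)$ and your treatment of the first piece (classical martingale transform with multiplier $\varepsilon_Q/\langle b\rangle_Q$, plus the maximal\mbox{-}function control of the dropped terminal children) are correct and match the paper's handling of \eqref{e.mt1} and \eqref{e.simple_ineq}. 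The $b_0$ construction and the resulting Carleson estimate $\sum_{R\subseteq R_0,\,R\in\mathcal Q}\lvert\langle b\rangle_R-\langle b\rangle_{\widehat R}\rvert^2\lvert R\rvert\lesssim\lvert R_0\rvert$ are also correct and are a genuinely different, rather elegant, way of encoding the accretivity hypotheses; the paper instead extracts this information by a direct $T1$\mbox{-}type testing $\lVert\Pi\mathbf 1_F\rVert_{L^1(F)}\lesssim\lvert F\rvert$ in \eqref{e.pi_rweak_type_app}.

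The gap is the final appeal to the ``classical $L^q$ theory of dyadic paraproducts'' for the term $B$. The symbol you wish to place in BMO is the adapted process $R\mapsto 1/\langle b\rangle_R$, and its increments $\langle b\rangle_{Q'}^{-1}-\langle b\rangle_Q^{-1}$ are \emph{not} martingale differences: the function $\sum_{Q'\in\textup{ch}(Q)}(\langle b\rangle_{Q'}^{-1}-\langle b\rangle_Q^{-1})\mathbf 1_{Q'}$ has no reason to integrate to zero over $Q$, because taking reciprocals does not commute with averaging (and restricting to $Q'\notin\mathcal T$ only makes this worse). Consequently your ``principal paraproduct'' $\sum_Q\varepsilon_Q\langle f\rangle_Q\sum_{Q'}(\langle b\rangle_{Q'}^{-1}-\langle b\rangle_Q^{-1})\mathbf 1_{Q'}$ is not a standard dyadic paraproduct against a BMO function, and the classical $L^q$ boundedness theorem does not apply off the shelf; the Carleson control on $\lvert\langle b\rangle_{Q'}^{-1}-\langle b\rangle_Q^{-1}\rvert^2$ alone, without the mean\mbox{-}zero structure, is not sufficient input for it. To close this you essentially have to perform the expansion $\langle b\rangle_{Q'}^{-1}-\langle b\rangle_Q^{-1}=-\tfrac{\langle b\rangle_{Q'}-\langle b\rangle_Q}{\langle b\rangle_Q^{2}}+\tfrac{(\langle b\rangle_{Q'}-\langle b\rangle_Q)^2}{\langle b\rangle_{Q'}\langle b\rangle_Q^{2}}$: the linear part does give a genuine paraproduct in the bounded function $b_0$ (with denominator indexed only by the parent, so mean\mbox{-}zero survives), while the quadratic part is a product\mbox{-}of\mbox{-}differences term needing a separate square\mbox{-}function estimate. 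That is exactly the paper's three\mbox{-}term decomposition \eqref{e.mt1}--\eqref{e.mt3}, and the subsequent verification of \eqref{e.SIO_ineq} and \eqref{e.SIO_ineq_2}; your proposal, as written, stops one step short of it.
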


\begin{proof} 
	It is important to note that this operator is, in fact, a constant multiple of a perfect Calder\'on-Zygmund operator. 
	It therefore suffices to verify the conditions of the $ T1$ Theorem, but this is not 
	convenient to do directly.  Instead, we  write the operator as a sum of three perfect Calder\'on-Zygmund 
	operators. In verifying the $ T1$  conditions for these operators,  we  use the 
	$ \mathbf T _{\textup{weak}}$ constant, testing the $L ^{1}$ and/or $L^p$ norm of $ T \mathbf 1_{F} $ and $ T ^{\ast} \mathbf 1_{F}$ 
	for cubes $F$.  Recall that $ b \in L ^{p}$. 

	For cube $ Q\in \mathcal Q$ with child $ Q'\not\in \mathcal T$, we  write 
\begin{align}
	\frac {\langle f \rangle _{Q'}} {\langle b \rangle _{Q'}  } 
	-
	\frac {\langle f \rangle _{Q}} {\langle b \rangle_Q}  
	&=   
	 \Bigl\{
	\frac {\langle f \rangle _{Q'}} {\langle b \rangle _{Q}  } 
	-	\frac {\langle f \rangle _{Q}} {\langle b \rangle_Q}  
	\Bigr\}+ 
	\Bigl\{
		\frac {\langle f \rangle _{Q'}} {\langle b \rangle _{Q'}  } - 
			\frac {\langle f \rangle _{Q'}} {\langle b \rangle _{Q}  } 
	\Bigr\} 
\\ &=   
	 \Bigl\{
	\frac {\langle f \rangle _{Q'}} {\langle b \rangle _{Q}  } 
	-	\frac {\langle f \rangle _{Q}} {\langle b \rangle_Q}  
	\Bigr\}
 +	\bigl\{ {\langle b \rangle_Q}  -  {\langle b \rangle _{Q'}  }  \bigr\} 
 \frac {\langle f \rangle_{Q'}} {  {\langle b \rangle _{Q'}  } {\langle b \rangle_Q}  } 
	\\ \label{e.mt1}
	&= 
	\Bigl\{
		\frac {\langle f \rangle _{Q'}} {\langle b \rangle _{Q}  } - 
			\frac {\langle f \rangle _{Q}} {\langle b \rangle _{Q}  } 
	\Bigr\} 
	\\ \label{e.mt2}
	& \qquad +	\bigl\{ {\langle b \rangle_Q}  -  {\langle b \rangle _{Q'}  }  \bigr\} 
	\frac {\langle f \rangle_{Q'}} {  {\langle b \rangle _{Q} ^2   }  } 
	\\ \label{e.mt3}& \qquad + 
	\bigl\{ {\langle b \rangle_Q}  -  {\langle b \rangle _{Q'}  }  \bigr\} ^2 
	\frac {\langle f \rangle_{Q'}} {  {\langle b \rangle _{Q'}  } {\langle b \rangle_Q} ^2   } 
\end{align}
This gives us three sums to bound.  Keep in mind that the averages of $ b$ that occur are bounded from 
above and below by failure of \eqref{e.ra}. In the first two expressions, the denominator is only a function of $ Q$, while 
in the third, it depends upon the child $ Q'$, with however the square on the difference on $ b$. 
The first term gives rise to a  classical martingale difference on $ f$, the second a martingale difference 
on $ b$,  and the third, 
a square function of a martingale difference on $ b$.

Let us observe that
\begin{equation}\label{e.simple_ineq}
	\Bigl \lVert 	\sum_{Q \in \mathcal Q} \varepsilon_Q \sum_{Q'\in\textup{ch}(Q)\setminus\mathcal{T}} 
	\big\{\langle f\rangle_{Q'}-\langle f\rangle_{Q}\big\}\mathbf{1}_{Q'}
	\Bigr\rVert _q 
	\lesssim ||f||_q \,, \qquad 1< q < \infty   \,. 
\end{equation}
Indeed, this is a consequence of the classical martingale transform inequality \eqref{e.classical_martingale}
and  maximal function
estimates in the disjoint family of missing terminal cubes. 
The desired estimate for the sum associated with terms \eqref{e.mt1} follows from this.

\smallskip 

An estimate for the sum associated with term \eqref{e.mt2} is clearly a consequence
of inequality,
\begin{equation}\label{e.SIO_ineq}
	\Bigl \lVert 	\sum_{Q \in \mathcal Q} \varepsilon_Q \sum_{Q'\in\textup{ch}(Q)\setminus\mathcal{T}} 
	\big\{\langle b\rangle_{Q'}-\langle b\rangle_{Q}\big\}\langle f\rangle_{Q'}\mathbf{1}_{Q'}
	\Bigr\rVert _q 
	\lesssim \mathbf{A} ||f||_q \,, \qquad 1< q < \infty   \,. 
\end{equation}
Denote the linear operator on the left hand side by $\Pi$.
After normalizing with a constant $c_{n,\delta} \mathbf{A}^{-1}$, 
we are looking
for $L^q$-norm estimates for a symmetric perfect Calder\'on-Zygmund operator.
It is classical that it suffices to verify inequality,
\begin{equation}\label{e.pi_rweak_type_app}
\lvert\lvert\Pi \mathbf 1_F \rvert\rvert_{L^1(F)} \lesssim \lvert F\rvert\,,
\end{equation}
where $F$ is a dyadic cube. In order to do this, let us write
\begin{equation}\label{e.two_series}
\Pi \mathbf{1}_F = \bigg\{\sum_{\substack{Q \in \mathcal Q\\ Q\supsetneq F}}
+\sum_{\substack{Q \in \mathcal Q\\ Q\subset F}}\bigg\}
 \varepsilon_Q \sum_{Q'\in\textup{ch}(Q)\setminus\mathcal{T}} 
	\big\{\langle b\rangle_{Q'}-\langle b\rangle_{Q}\big\}\langle \mathbf 1_F\rangle_{Q'}\mathbf{1}_{Q'}.
\end{equation}
Using Minkowski inequality and the trivial estimate 
$\lvert\langle \mathbf{1}_F\rangle_{Q'}\rvert\le  \lvert F\rvert \lvert Q'\rvert^{-1}$, we find
that $L^p$-norm of the first series is bounded 
by $c_n\delta^{-1} \mathbf{A}\lvert F\rvert^{1/p}$.
Concerning the second series, we can clearly assume that $Q\subset  F$ for some $Q\in\mathcal{Q}$.
Let us denote by
$R$ the maximal cube in $\mathcal{Q}$, contained in $F$.
Assuming $\mathcal{Q}\ni Q\subset F$ and $Q'$ is a child of $Q$, then $\langle \mathbf{1}_F\rangle_{Q'}=1$,
 $\langle b\rangle_{Q'}=\langle b\mathbf{1}_R\rangle_{Q'}$,
and likewise $\langle b\rangle_{Q}=\langle b\mathbf{1}_R\rangle_{Q}$. By inequality \eqref{e.simple_ineq}, 
setting $\varepsilon_Q=0$ if $Q\supsetneq F$,
the $L^p$-norm
of the second series in \eqref{e.two_series} is
bounded by $\lvert\lvert b\mathbf{1}_R\rvert\rvert_p$ which, in turn,
is bounded by $\delta^{-1/p}\mathbf{A}|F|^{1/p}$. 
This concludes
the proof of inequality \eqref{e.pi_rweak_type_app} and, as a consequence,
we obtain inequality \eqref{e.SIO_ineq}.

\smallskip 

It remains to estimate the sum associated with term \eqref{e.mt3}. Namely,
we need the following inequality,
\begin{equation}\label{e.SIO_ineq_2}
	\Bigl \lVert 	\sum_{Q \in \mathcal Q} \varepsilon_Q \sum_{Q'\in\textup{ch}(Q)\setminus\mathcal{T}} 
	\big\{\langle b\rangle_{Q'}-\langle b\rangle_{Q}\big\}^2
	\frac{\langle f\rangle_{Q'}}{\langle b\rangle_{Q'}\langle b\rangle_Q^2}\mathbf{1}_{Q'}
	\Bigr\rVert _q 
	\lesssim \mathbf{A} ||f||_q \,, \qquad 1< q < \infty   \,. 
\end{equation}
Denote the linear operator on the left hand side by $\amalg$.
Again, after a normalization by $c_{n,\delta} \mathbf{A}^{-2}$, we are
looking for $L^q$ estimates of perfect Calder\'on-Zygmund operator.
By symmetry of $\amalg$, it suffices to verify that
\begin{equation}\label{e.pi_rweak_type}
\lvert\lvert  \amalg \mathbf{1}_F\rvert\rvert_{L^1(F)}  \lesssim |F|\,,
\end{equation}
where $F$ is a dyadic cube. In order to verify this
inequality, we split the series defining $\amalg \mathbf{1}_F$ in two parts as above, one
with cubes $Q\supsetneq F$ and the other with cubes $Q\subset F$.
Reasoning as above,
we find that the $L^2$-norm of the first series is bounded by $c_{n,\delta}\mathbf{A}^2|F|^{1/2}$.
The second series to estimate is
\begin{align*}
	&\Bigl \lVert 	\sum_{\substack{Q \in \mathcal Q\\Q\subset F}} \varepsilon_Q \sum_{Q'\in\textup{ch}(Q)\setminus\mathcal{T}} 
	\big\{\langle b\rangle_{Q'}-\langle b\rangle_{Q}\big\}^2
	\frac{\langle \mathbf 1_F\rangle_{Q'}}{\langle b\rangle_{Q'}\langle b\rangle_Q^2}\mathbf{1}_{Q'}
	\Bigr\rVert _1\,.
\end{align*}
Using the fact that $\langle \mathbf 1_F\rangle_{Q'}=1$ if $Q\subset F$ and $Q'$ is a child of $Q$, yields the upper bound
\begin{align*}
	&c_\delta
	\sum_{\substack{Q \in \mathcal Q}} \sum_{Q'\in\textup{ch}(Q)\setminus\mathcal{T}}
	\int_{\mathbf R^n}
	\big\lvert \big\{\langle b\rangle_{Q'}-\langle b\rangle_{Q}\big\}
	{\langle \mathbf 1_F\rangle_{Q'}}\mathbf{1}_{Q'}(x)\big\rvert^2\,dx\,,
\end{align*}
where we have also relaxed the summation condition using positivity of the summands.
This upper bound, in turn, is bounded by a constant
multiple of $\mathbf{A}^2\lvert\lvert \mathbf 1_F\rvert\rvert_2^2=\mathbf{A}^2|F|$ -- 
a consequence of a square function estimate arising from randomization
of inequality
\eqref{e.SIO_ineq} with $q=2$ by taking $\varepsilon_Q$'s to be Rademacher random variables.
This completes the proof of inequality \eqref{e.pi_rweak_type} and, consequently, the proof of theorem.
\end{proof}

We have the following easier proposition.

\begin{proposition}\label{p.LM} 
This inequality holds for all selection of constants $|\varepsilon_Q|\le 1$.
	\begin{equation*}
		\Bigl\lVert \sum_{T\in \mathcal T} \varepsilon_{T^{(1)}}\langle f \rangle_T \mathbf 1_{T}
		\Bigr\rVert _q + 
		\Bigl\lVert \sum_{T\in \mathcal T} \varepsilon_{T^{(1)}}\frac{\langle f \rangle_{T^{(1)}}}{\langle b\rangle_{T^{(1)}}} \mathbf 1_{T}
		\Bigr\rVert _q
		\lesssim \lVert f\rVert_q,\quad 1<q<\infty\,.
	\end{equation*}
\end{proposition}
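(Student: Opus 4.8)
The plan is to exploit that the cubes of $\mathcal{T}$ are pairwise disjoint, so that taking an $L^q$ norm of a sum of functions supported on distinct cubes of $\mathcal{T}$ simply adds the $q$-th powers; the remaining pointwise estimates will then be handled by the dyadic maximal operator $Mf(x):=\sup_{Q\ni x}\langle \lvert f\rvert\rangle_Q$, the supremum being over dyadic cubes $Q$ containing $x$. Note that the restriction $1<q<\infty$ is genuinely needed: already the second sum fails at $q=1$ (take $b\equiv 1$, $S_0=[0,1)$, $\mathcal{T}=\{[2^{-k-1},2^{-k})\}_{k\ge1}$, and $f=\mathbf 1_{[0,2^{-N})}$).

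First I would record that, for any scalars $(c_T)_{T\in\mathcal{T}}$, disjointness of the supports together with $\lvert\varepsilon_{T^{(1)}}\rvert\le1$ gives
\begin{equation*}
\Bigl\lVert \sum_{T\in\mathcal{T}} \varepsilon_{T^{(1)}} c_T \mathbf{1}_T\Bigr\rVert_q^q
= \sum_{T\in\mathcal{T}} \lvert\varepsilon_{T^{(1)}}\rvert^q\,\lvert c_T\rvert^q\,\lvert T\rvert
\le \sum_{T\in\mathcal{T}} \lvert c_T\rvert^q\,\lvert T\rvert\,.
\end{equation*}
Applying this with $c_T=\langle f\rangle_T$ and then with $c_T=\langle f\rangle_{T^{(1)}}/\langle b\rangle_{T^{(1)}}$ reduces both halves of the Proposition to the single-term estimate $\sum_{T\in\mathcal{T}} \lvert c_T\rvert^q\lvert T\rvert\lesssim\lVert f\rVert_q^q$.

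For $c_T=\langle f\rangle_T$ this is immediate: $\lvert\langle f\rangle_T\rvert\le Mf(x)$ for every $x\in T$, hence $\lvert\langle f\rangle_T\rvert^q\lvert T\rvert\le\int_T (Mf)^q\,dx$, and summing over the disjoint family $\mathcal{T}$ yields $\lVert Mf\rVert_q^q\lesssim_q\lVert f\rVert_q^q$ by the Hardy--Littlewood maximal theorem. For the second choice of $c_T$ the only extra ingredient is a lower bound for $\lvert\langle b\rangle_{T^{(1)}}\rvert$. For $T\in\mathcal{T}$ with $T\subsetneq S_0$ the dyadic parent satisfies $T^{(1)}\subseteq S_0$, and $T^{(1)}$ is contained in no cube of $\mathcal{T}$: otherwise $T$ would be properly contained in another cube of $\mathcal{T}$, contradicting disjointness. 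Hence $T^{(1)}\in\mathcal{Q}$, and therefore $T^{(1)}$ violates the first alternative in \eqref{e.ra}; for if it satisfied it, maximality in the definition of $\mathcal{T}'$ together with the requirement that every cube of $\mathcal{T}'$ lie inside a cube of $\mathcal{T}$ would force $T^{(1)}$ into a cube of $\mathcal{T}$. Thus $\lvert\langle b\rangle_{T^{(1)}}\rvert>\delta$. (The sole exceptional case $T=S_0$ is harmless: then $\langle b\rangle_{S_0^{(1)}}=2^{-n}$ since $b$ is supported in $S_0$, and $S_0^{(1)}\supset S_0$.) Using also $\lvert\langle f\rangle_{T^{(1)}}\rvert\le Mf(x)$ for $x\in T\subset T^{(1)}$, we obtain $\lvert c_T\rvert\le\delta^{-1}Mf(x)$ on $T$, and the sum is bounded by $\delta^{-q}\lVert Mf\rVert_q^q\lesssim_{q,\delta}\lVert f\rVert_q^q$.

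The only substantive point is the lower bound $\lvert\langle b\rangle_{T^{(1)}}\rvert\gtrsim_{n,\delta}1$, which rests on $T^{(1)}\in\mathcal{Q}$ and the stopping-time construction of $\mathcal{T}'$ via \eqref{e.ra}; once that is in hand, the estimate is pure disjointness plus the maximal theorem. I would carry out the steps in the order above.
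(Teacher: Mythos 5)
Your proof is correct and follows essentially the same route as the paper's: exploit disjointness of $\mathcal{T}$ to decouple the $L^q$ norm, invoke the dyadic maximal theorem, and use the lower bound $\lvert\langle b\rangle_{T^{(1)}}\rvert\gtrsim\delta$. You supply more detail than the paper's two-line proof — in particular the verification that $T^{(1)}\in\mathcal{Q}$ and hence fails the first stopping condition of \eqref{e.ra}, plus the $T=S_0$ edge case — but this is the same argument the paper leaves to the reader.
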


\begin{proof}
By disjointness
of terminal cubes and the estimate $|\langle b\rangle_{T^{(1)}}|\ge \delta$ for $T\in\mathcal{T}$,
\begin{align*}
\Bigl\lVert \sum_{T\in \mathcal T} \varepsilon_{T^{(1)}}\frac{\langle f \rangle_{T^{(1)}}}{\langle b\rangle_{T^{(1)}}} \mathbf 1_{T}
		\Bigr\rVert _q &\le \delta^{-1} \bigg(\sum_{T\in\mathcal{T}} \int_{\mathbf R^n}  \big\lvert
		\langle f\rangle_{T^{(1)}} \mathbf 1_T(x)\big\rvert^q\,dx\bigg)^{1/q}\\
		&\lesssim \delta^{-1}\lvert\lvert Mf\rvert\rvert_q\lesssim \delta^{-1}\lvert\lvert f\rvert\rvert_q.
\end{align*}
The other term is estimated in a similar manner.
\end{proof}

\begin{proof}[Proof of Theorem~\ref{t.mt}.]  
	Let us set $ B f := \sum_{Q\in \mathcal Q} \varepsilon_Q D_Q f$, and observe
	that
	\begin{equation}\label{e.delta_decomp}
	\sum_{Q\in\mathcal{Q}} \varepsilon\Delta_Q f
	=Bf \cdot b + 
	\sum_{T\in\mathcal{T}} \varepsilon_{T^{(1)}} \langle f\rangle_T \mathbf 1_T\cdot \sum_{T\in\mathcal{T}} b_T
	-\sum_{T\in\mathcal{T}} \varepsilon_{T^{(1)}} \frac{\langle f\rangle_{T^{(1)}}}{\langle b\rangle_{T^{(1)}}}
	 \mathbf 1_T\cdot b\,.	
	\end{equation}
		Consider the events $ E _{\lambda } := \{ \lvert  Bf \rvert \ge \lambda  \}\subset S_0$, 
	where $ \lambda >0$. 
	Let $S_\mathcal{T}\subset S_0$ be the union of terminal cubes $T\in \mathcal{T}$.
By construction of $\mathcal{T}$, and Lebesgue differentiation Theorem,
we have $|b(x)|\le \delta^{-1}\mathbf{A}$ for almost every $x\in S_0\setminus S_{\mathcal{T}}$.
Hence,
\begin{align*}
\int_{E_\lambda\setminus S_{\mathcal{T}}} |b|^p\,dx \le 
\delta^{-p}\mathbf{A}^p|E_\lambda \setminus S_{\mathcal{T}}|\,.
\end{align*}
Observe that $Bf$ is constant on terminal cubes. 
Let us denote $\mathcal{T}_\lambda:=\{T\in\mathcal{T}\,:\,|Bf|\ge \lambda\textrm{ on }T\}$. Since
$T^{(1)}\in \mathcal{Q}$ for each terminal cube $T$,
\begin{align*}
\int_{E_\lambda\cap S_{\mathcal{T}}} |b|^p\,dx &=\sum_{T\in\mathcal{T}_\lambda}
|T| \bigg(\frac{1}{|T|} \int_T |b|^p\,dx\bigg) \\&\le 2^n\delta^{-p}\mathbf{A}^p \sum_{T\in\mathcal{T}_\lambda}|T|
\le 2^n\delta^{-p}\mathbf{A}^p |E_\lambda\cap S_{\mathcal{T}}|.
\end{align*}
 It follows that we
	can compare Lebesgue measure estimates and estimates with respect to $ \lvert  b (x)\rvert ^{p} \; dx  $. 
	Namely, 
\[
\int_{E_\lambda} |b|^p\,dx \le 2^n\delta^{-p}\mathbf{A}^p |E_\lambda|.
\]
Therefore, by a standard formula
and the Lebesgue measure estimates of Theorem~\ref{t.LM},  
\begin{equation*}
	\int _{S_0} \lvert  Bf\rvert ^{p } \lvert  b\rvert ^{p} \; dx  
	= p\int _{ 0} ^{\infty } \lambda ^{p-1} \int _{E _{\lambda }} \lvert  b\rvert ^{p} \; dx  \; d \lambda 
	\lesssim \mathbf A^{3p} \lVert f \rVert_p ^{p} \,. 
	\end{equation*}

The $L^p$-norms of the two remaining quantities in the right
hand side of \eqref{e.delta_decomp} are estimated in a similar manner, by using
 Proposition~\ref{p.LM} and measures
$\sum_{T\in \mathcal T} \lvert  b_T\rvert ^{p} \, dx$ and 
	$\lvert  b (x)\rvert ^{p} \,dx$ instead.

\end{proof}

\section{The Corona} 

We will work with different subsets of the dyadic grid, and need some 
notations.  
Given $ Q \in \mathcal D$, we denote by $ \textup{ch}(Q)$ the $ 2 ^{n}$ dyadic children of $ Q$. 
Given $ \mathcal S\subset \mathcal D$, we can refer to the $ \mathcal S$-children of $ S\in \mathcal S$: 
The maximal elements $ S'$ of $ \mathcal S$ that are strictly contained in $ S$.  This collection is denoted as 
$ \textup{ch} _{\mathcal S} (S)$. 
For a
cube $ Q\in \mathcal D$, that is contained in a cube in $\mathcal{S}$,
we take $ \pi _{\mathcal S} Q$ to be the $ \mathcal S$-parent of $ Q$: The smallest
cube in $ \mathcal S$ that 
contains $ Q$.  

This is the construction of stopping cubes: for a fixed $Q_0\in\mathcal{D}$, families
$\mathcal{S}_1, \mathcal{S}_2\subset \mathcal D$ are defined as follows. 
Take the cube
$ Q_0$ in $\mathcal{S}_1$.
In the inductive stage, if $ S\in \mathcal S_1$, take as members of $ \mathcal S_1$ 
those maximal dyadic descendants  $ Q$ which meet any one of these several conditions: 
\begin{enumerate}  

\item  $\big\rvert \int_Q b^{1}_{S} \big\rvert\le \delta|Q|$.

\item $    \int _{Q} \lvert b_S^{1}\rvert ^{p_1} \ge \delta ^{-1}\mathbf{A}^{p_1} \lvert  Q\rvert$.

\item  $    \int _{Q} \lvert T b_S ^{1}\rvert ^{p_2'} \ge \delta ^{-1}\mathbf T _{\textup{loc}} ^{p_2'} \lvert  Q\rvert $. 
\end{enumerate}
A stopping tree $\mathcal{S}_2$
is then constructed
analogously, but using functions
$\{b^2_S\}_{S\in\mathcal{D}}$, 
and exponents $p_2$ and $p_1'$ in conditions (2) and (3),
respectively. Furthermore, in (3) we use $T^*$ instead of $T$.

If $\delta>0$ is chosen sufficiently small in the construction above,
then there is a constant $\tau\in (0,1)$
such that
\begin{equation}\label{e.generations_no_f}
	\sum_{\substack{S' \in \textup{ch} _{\mathcal S_1} (S) }} |S'| \le \tau|S|\,, \qquad S\in \mathcal S_1\,. 
\end{equation}
Here both $\tau$ and $\delta$ depend on $\mathbf{A}$. 
It follows 
from construction of $\mathcal{S}_1$ and inequality \eqref{e.generations_no_f}  --
and their counterparts in $\mathcal{S}_2$ --
that $ \mathcal S$ is a Carleson family of of cubes.  Namely, 
there holds
\begin{equation}\label{e.carleson}
	\sum_{S \in \mathcal S \;:\; S\subset Q} \lvert  S\rvert \lesssim_{\tau} \lvert  Q\rvert  \,, \qquad \mathcal{D}\ni Q\subset Q_0\,.
\end{equation}
In the sequel, we suppose that $\delta$ is chosen as above.

\begin{remark}
With $ \mathcal S$ so constructed, many subsequent inequalities have constant 
that depend upon the values of $ \delta $ and $ \mathbf A$.  The dependence is 
not straightforward, and we do not attempt to track it.  Frequently, this dependence is 
even suppressed in the notation, so that in many parts of the argument relative to $ \mathcal S$, 
the symbol `$ \lesssim  $'  should be read as `the unspecified implied constant depends upon 
dimension, $\tau$, $ \delta $ and $ \mathbf A$, but is otherwise absolute.' 
Dependencies on other parameters are indicated by subscripts,
 e.g. $\lesssim_p 1$
means $\lesssim c_p$ where  $c_p$ depends on $p$ only.
\end{remark}

Following \cite{MR986825}, we define the $ b$-adapted conditional expectations and martingale differences,
associated with a dyadic cube $Q\subset Q_0$, by
\begin{align}\label{e.Delta}
E_Q^j h := \frac {\langle h \rangle _{Q}} {\langle b ^{j} _{\pi_{\mathcal{S}_jQ}} \rangle_Q} 
	b ^{j}_{\pi_{\mathcal{S}_j}Q} \mathbf{1}_Q,\qquad
	\Delta _{Q} ^{j} h &:= 
	\sum_{Q'\in \textup{ch}(Q)} 
	\bigl[
	E_{Q'}^j h -E_{Q}^j h\bigr] \cdot \mathbf{1}_{Q'}\,.
\end{align}
Observe that the functions $\Delta_Q^j h$ have mean zero in $\mathbf{R}^n$, and
they are supported in the cube $Q$. 

The following Lemma 
is proven like  Lemma 3.5 in \cite{1011.0642}, with obvious modifications.

\begin{lemma}\label{l.representation}
Let 
$h\in L^{p_j}(\mathbf{R}^n)$, $j\in \{1,2\}$. Then, there holds pointwise and in 
$L^{p_j}(\mathbf{R}^n)$, 
	\begin{equation*}
		h\mathbf{1}_{S}= E_{S}^j h + \sum_{Q:Q\subset S}  \Delta _{Q}^j  h,\,\qquad 
		\quad S\subset Q_0.
\end{equation*}
\end{lemma}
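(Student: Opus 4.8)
The spine of the proof is the telescoping of the $b$-adapted conditional expectations $E_Q^j$ of \eqref{e.Delta} along dyadic chains. Fix $j\in\{1,2\}$, a cube $S\subset Q_0$, and a point $x$; if $x\notin S$ then every term of the asserted identity vanishes, so assume $x\in S$ and let $S=Q'_0\supsetneq Q'_1\supsetneq\cdots$ be the dyadic chain with $x\in Q'_k$ and $Q'_{k+1}\in\textup{ch}(Q'_k)$. Because $x$ lies in exactly one child of each $Q'_k$, the definition of $\Delta_Q^j$ gives $\Delta_{Q'_k}^j h(x)=E_{Q'_{k+1}}^j h(x)-E_{Q'_k}^j h(x)$, and since the cubes $Q\subset S$ containing $x$ are precisely the $Q'_k$ while each $\Delta_Q^j h$ is supported in $Q$, the partial sums telescope:
\begin{equation*}
E_S^j h(x)+\sum_{\substack{Q\subset S\\ \ell Q>2^{-N}\ell S}}\Delta_Q^j h(x)
=E_S^j h(x)+\sum_{k=0}^{N-1}\Delta_{Q'_k}^j h(x)
=E_{Q'_N}^j h(x).
\end{equation*}
So the whole matter reduces to proving $E_{Q'_N(x)}^j h(x)\to h(x)$ for a.e.\ $x\in S$, and then lifting this to convergence in $L^{p_j}$.

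For the pointwise statement I would invoke the Lebesgue differentiation theorem. Put $S^\ast_N:=\pi_{\mathcal S_j}Q'_N$, a nonincreasing chain of cubes of $\mathcal S_j$. If $(S^\ast_N)_N$ fails to stabilize then $x$ lies in infinitely many cubes of $\mathcal S_j$, and iterating the geometric decay \eqref{e.generations_no_f} shows that the set of such $x$ is Lebesgue null, so this case is negligible. Otherwise $S^\ast_N=S^\ast$ for all large $N$ and some $S^\ast\in\mathcal S_j$; then $E_{Q'_N}^j h=\langle h\rangle_{Q'_N}\,\langle b^j_{S^\ast}\rangle_{Q'_N}^{-1}\,b^j_{S^\ast}$ on $Q'_N$, and for large $N$ the cube $Q'_N$ sits strictly inside $S^\ast$ and is not in $\mathcal S_j$, so failure of stopping condition~(1) forces $|\langle b^j_{S^\ast}\rangle_{Q'_N}|\ge\delta$; meanwhile differentiation gives $\langle h\rangle_{Q'_N}\to h(x)$ and $\langle b^j_{S^\ast}\rangle_{Q'_N}\to b^j_{S^\ast}(x)$ for a.e.\ such $x$, so in particular $|b^j_{S^\ast}(x)|\ge\delta$ a.e. Hence $E_{Q'_N}^j h(x)\to h(x)$. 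This proves the identity pointwise a.e.\ (and en route shows that the series in the Lemma converges a.e.).

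For the $L^{p_j}$ statement, set $P_N h:=E_S^j h+\sum_{Q\subset S,\ \ell Q>2^{-N}\ell S}\Delta_Q^j h$, so that the telescoping above reads $P_N h=\sum_{Q\subset S,\ \ell Q=2^{-N}\ell S}E_Q^j h$. I would first check that $P_N$ is bounded on $L^{p_j}$ uniformly in $N$: the generation-$N$ subcubes of $S$ have pairwise disjoint interiors, on each such $Q$ one has $|\langle b^j_{\pi_{\mathcal S_j}Q}\rangle_Q|\ge\delta$ and $\int_Q|b^j_{\pi_{\mathcal S_j}Q}|^{p_j}\le\delta^{-1}\mathbf A^{p_j}|Q|$ (by failure of stopping conditions~(1) and~(2), or, when $Q\in\mathcal S_j$, by the normalizations $\langle b^j_Q\rangle_Q=1$ and $\lVert b^j_Q\rVert_{p_j}\le\mathbf A|Q|^{1/p_j}$), and $|\langle h\rangle_Q|\le Mh$ on $Q$, with $M$ the Hardy--Littlewood maximal operator; summing these estimates over $Q$ yields $\lVert P_N h\rVert_{p_j}\lesssim\lVert Mh\rVert_{p_j}\lesssim\lVert h\rVert_{p_j}$. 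To upgrade the a.e.\ identity to norm convergence, I would reduce by density and this uniform bound to $h\in L^{p_j}\cap L^\infty$, for which there is the $N$-independent majorant
\begin{equation*}
\sup_N\,\lvert P_N h\rvert\ \le\ \delta^{-1}\lVert h\rVert_\infty\,H,
\qquad
H:=\Bigl(\sum_{S^\ast\in\mathcal S_j}\lvert b^j_{S^\ast}\rvert^{p_j}\mathbf 1_{S^\ast}\Bigr)^{1/p_j},
\end{equation*}
and $H\in L^{p_j}(\mathbf R^n)$ because $\lVert b^j_{S^\ast}\rVert_{p_j}^{p_j}\le\mathbf A^{p_j}|S^\ast|$ and $\mathcal S_j$ is a Carleson family of subcubes of $Q_0$, \eqref{e.carleson}. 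Dominated convergence then gives $P_N h\to h\mathbf 1_S$ in $L^{p_j}$ for such $h$, and the general $h\in L^{p_j}$ follows by density and the uniform boundedness of the $P_N$.

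The pointwise half is routine telescoping and differentiation; I expect the only real work to lie in the $L^{p_j}$ half, and there in producing the $N$-independent majorant $H$ and verifying $H\in L^{p_j}$ --- precisely where the Carleson packing \eqref{e.carleson} of the stopping trees and the $L^{p_j}$ side of the accretivity hypothesis enter. A persistent bookkeeping nuisance is the case of a cube $Q$ that is itself a stopping cube, where $\pi_{\mathcal S_j}Q=Q$ and $b^j_{\pi_{\mathcal S_j}Q}=b^j_Q$ has average exactly $1$ on $Q$ rather than merely $\ge\delta$. This being the standard corona/martingale decomposition, the remaining details are as in the cited Lemma~3.5.
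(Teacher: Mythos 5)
Your argument is correct and is precisely the standard corona/martingale proof that the paper delegates to Hyt\"onen--Martikainen (Lemma 3.5 of \cite{1011.0642}): telescoping the $b$-adapted expectations to $E^j_{Q'_N}h$, a.e.\ convergence from the Lebesgue differentiation theorem plus the geometric decay \eqref{e.generations_no_f} of stopping generations, uniform $L^{p_j}$-boundedness of $P_N$ via the maximal function and the stopping controls, and upgrading to norm convergence by density against the Carleson-controlled majorant $H$. No gap; this matches the intended proof.
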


We will suppress the notation by denoting
$E_Q^j=E_Q$ and $\Delta_Q^j = \Delta_Q$, $j\in\{1,2\}$.

\medskip

By a well-known reduction from the  $T1$ Theorem,    it suffices to show that
\[
\lvert \langle Tf, g\rangle\rvert\lesssim (1+\mathbf{T}_{\textup{loc}})|Q_0|,
\]
where $f$ and $g$ are measurable functions with the property $|f|=|g|= \mathbf{1}_{Q_0}$, \cite{MR1934198,1011.1747,MR2664559}. 
By
Lemma \ref{l.representation},
the expansion of the bilinear form is
\begin{align*}
	\langle T f,g \rangle &= \langle T E_{Q_0} f,g\rangle 
	+ \langle T\sum_{P:P\subset Q_0} \Delta_P f,E_{Q_0} g\rangle
	+ \sum_{P,Q:P\cup Q\subset Q_0} \langle T \Delta_P f,\Delta_Q g\rangle.
\end{align*}
Using the assumptions, it is straightforward to verify that
\begin{align*}
|\langle T E_{Q_0} f,g\rangle|\le  \mathbf{T}_{\textup{loc}} |Q_0|^{1/p_2'} |Q_0|^{1/p_2}
\end{align*}
and, by using also Lemma \ref{l.representation}, that
\begin{align*}
|\langle T\sum_{P:P\subset Q_0} \Delta_P f,E_{Q_0} g\rangle|
\le \mathbf{A}\mathbf{T}_{\textup{loc}}|Q_0|^{1/p_1}|Q_0|^{1/p_1'}.
\end{align*}
Since $1/p_1+1/p_1'=1=1/p_2+1/p_2'$, we are left with estimating the main term
\begin{align*}
\sum_{P, Q} \langle T \Delta _{P} f, \Delta _{Q} g \rangle=
	\Biggl\{ \sum_{ P, Q \;:\; ell P <  ell Q} + 
	\sum_{P, Q \;:\; ell P = ell Q} +
	\sum_{ P, Q \;:\; ell P> ell Q}    \Biggr\} \langle T \Delta _{P} f, \Delta _{Q} g \rangle\,,
	\end{align*}
where all the summations are restricted to dyadic cubes $P,Q$ contained in $Q_0$.
As is standard, we will assume that $ T$ has kernel $ K$ which is perfect, and $ K (x,y)$ is 
identically zero for $ \lvert  x-y\rvert $ sufficiently small. In particular, the sum above can be taken over a 
finite collection of $ P, Q$. We will rearrange the sum, as is convenient for us. All bounds are independent of the 
this last assumption on the kernel.

We will show that the second and third forms above obey the desired estimate, and by 
duality the same will hold for the first term. 
%
To state this otherwise, it suffices to consider 
the two forms 
\begin{align}
	B _{\textup{above}} (f,g) & := 
	\sum_{ P, Q \;:\; P \supsetneq Q}   \langle T (\Delta _{P}  f), \Delta _{Q} g\rangle \,, 
	\\ \label{e.=}
	 B _{\textup{=}} (f,g) &:= \sum_{P, Q \;:\; P =   Q}   \langle T \Delta _{P} f, \Delta _{Q} g\rangle \,,
	 \end{align} 
where the summations are restricted to cubes $P,Q$ that are contained in $Q_0$.
We will suppress this notationwise also in the sequel, but this fact will be used nevertheless.
 It is noteworthy that the martingale transform inequality is decisive in estimating 
 both of these terms. 

\section{The Term $ B _{\textup{above}}$} 

We address a book keeping issue.  For cube $ P$ set 
\begin{equation*}
	\widetilde \Delta _P f 
	:= \sum_{P'\in \textup{ch} (P) \setminus \mathcal S_{1}} 
	\frac {\langle f \rangle _{P'}} {\langle b ^{1} _{\pi _{\mathcal S_{1}}P' } \rangle_{P'}} \mathbf 1_{P'}
	-\frac {\langle f \rangle _{P}} {\langle b ^{1} _{\pi _{\mathcal S_{1}}P } \rangle_P} \mathbf 1_{P}\,. 
\end{equation*}
This is closely related to a half-twisted martingale difference associated with $ P$.  It suffices to show that for any $ S\in \mathcal S_{1}$
\begin{equation*}
\Bigl\lvert 
\mathbf{1}_{ \{S\not=Q_0\}}\cdot
\langle f \rangle _{S} \sum_{Q \;:\; Q\subset S} \langle T b ^{1} _S, \Delta _Qg\rangle 
+ 
\sum_{P \;:\; \pi _{\mathcal S_{1}}P=S} \sum_{Q\subsetneq P} \langle T  (b ^{1}_S \widetilde \Delta _P f), \Delta _Qg\rangle 
\Bigr\rvert \lesssim   \mathbf T _{\textup{loc}}    \lvert  S\rvert \,.  
\end{equation*}
Indeed, the sum of the left-hand side over $ S\in \mathcal S_1$ equals $ B _{\textup{above}} (f,g)$, and the collection $ \mathcal S_{1} $ 
is a Carleson sequence of cubes.  
The terms involving $ f$ only depend upon $ b ^{1}_S$, which is a convenience. 

The first term is easy to estimate.  The twisted martingale differences on $ g$ telescope, so that 
\begin{align*}
	\sum_{Q \;:\; Q\subset S}   \Delta _Qg & = g \mathbf 1_{S} - 
		\frac {\langle g \rangle _{S}} {\langle b ^{2} _{\pi _{\mathcal S_{2}}S } \rangle_S}  
		b ^{2} _{\pi _{\mathcal S_{2}}S }  \mathbf 1_{S} \,. 
\end{align*}
The ratio of averages is controlled, by construction. By the local $ Tb$ assumptions
and construction,
\begin{align*}
	\lvert \langle T b ^{1}_S, g \mathbf 1_{S} \rangle\rvert +
	\lvert \langle T b ^{1}_S, b ^{2} _{\pi _{\mathcal S ^{2}}S }  \mathbf 1_{S} \rangle\rvert \lesssim \mathbf T _{\textup{loc}} \lvert  S\rvert\,.  
\end{align*}

For the second term, the twisted martingale transform is the decisive point.  For pairs of cubes $ Q \subsetneq P$, 
let $ P_Q$ denote the child of $ P$ that contains $ Q$. The property of $ T$ being  perfect, and 
$ \Delta _Qg$ having integral zero, allows us to write 
\begin{align*}
 \langle T  (b ^{1}_S \widetilde \Delta _P f), \Delta _Qg\rangle & = 
 \langle T  (b ^{1}_S \widetilde \Delta _P f  \cdot \mathbf 1_{P_Q}), \Delta _Qg\rangle 
 \\
 &= \langle  \widetilde \Delta _P f   \rangle _{P_Q}
  \langle T  (b ^{1}_S  \mathbf 1_{P_Q}), \Delta _Qg\rangle 
  \\
  &= \langle  \widetilde \Delta _P f   \rangle _{P_Q}
  \langle T  b ^{1}_S, \Delta _Qg\rangle \,. 
\end{align*}
We have first restricted the argument of $ T$ to the cube $ P_Q$, pulled out the constant value of $\widetilde \Delta _P f$ 
on that cube, and finally extended the argument of $ T$ to the entire cube $ S$. 

Now, fix $ Q\subsetneq S$, and define a constant $ \varepsilon _Q$ by the formula 
\begin{equation*}
	\varepsilon _{Q}  := 
	\sum_{\substack{{ P  \;:\; P\supsetneq Q}\\ \pi_{\mathcal{S}_1} P=S}} \langle  \widetilde \Delta _P f   \rangle _{P_Q} \,. 
\end{equation*}
These numbers are bounded by a constant, since the sum  is  telescoping, and equals the difference of two 
$b$-averages of  $ f$ (or a single average, in case of $\pi_{\mathcal{S}_1}Q\subsetneq S$), which are bounded. 
We can  make a direct  appeal to the local $ Tb$ hypothesis. 
\begin{align*}
\Bigl\lvert 
\sum_{P \;:\; \pi _{\mathcal S_{1}}P=S} \sum_{Q\subsetneq P}
\langle  \widetilde \Delta _P f   \rangle _{P_Q} \langle T  b ^{1}_S, \Delta _Qg\rangle 
\Bigr\rvert 
&= 
\Bigl\lvert 
\sum_{Q\subsetneq S} \langle T  b ^{1}_S , \varepsilon _Q \Delta _Qg\rangle 
\Bigr\rvert
\\
& \le \mathbf T _{\textup{loc}} \lvert  S\rvert ^{1/p_2'} 
\Bigl\lVert 
\sum_{Q\subsetneq S} \varepsilon _Q \Delta _Qg 
\Bigr\rVert_{p_2}
\lesssim \mathbf T _{\textup{loc}} \lvert  S\rvert \,. 
\end{align*}
The twisted martingale transform inequality and the construction provide the last inequality. 

Indeed, let $ S_2$ be the $ \mathcal S_2$ parent of $ S$, and set $ \mathcal R _1 := \{S_2\}$. 
Let $ \mathcal R _2$ be the $ \mathcal S_2$ children of $S_2$
strictly contained in $ S$, and inductively set $ \mathcal R _{k+1}$ to the $ \mathcal S_2$ children of cubes $ R \in \mathcal R _{k}$. 
Each function below is a twisted martingale transform of $ g$
\begin{equation*}
	\gamma _{R} := \sum_{\substack{Q \;:\; \pi _{\mathcal S_2} Q=R\\ Q\subsetneq S }} \varepsilon _Q \Delta _{Q} g \,,   \qquad 
	R \in  \bigcup _{k=1} ^{\infty } \mathcal R _{k} \,. 
\end{equation*}
There holds $ \lVert \gamma _R\rVert_{p_2} \lesssim \lvert  R \cap S\rvert ^{1/p_2} $, by the martingale transform inequality 
and the fact that $ g$ is a bounded function.  Moreover,  from \eqref{e.generations_no_f}, it follows that  
\begin{equation*}
	\sum_{R\in \mathcal R _{k}} \lvert   R  \cap S\rvert \lesssim\tau^k \lvert  S\rvert \,, 
\end{equation*}
where $ 0< \tau < 1$ is fixed.  Hence, 
\begin{align}  \label{e.888}
\Bigl\lVert 
\sum_{Q\subsetneq S} \varepsilon _Q \Delta _Qg 
\Bigr\rVert_{p_2} ^{p_2} &= 
\Bigl\lVert  \sum_{k=1} ^{\infty } 
\sum_{ R \in \mathcal R _{k}} k ^{-1+1} \gamma _{R} 
\Bigr\rVert_{p_2}^{p_2}
\\
& \lesssim \sum_{k=1} ^{\infty } 
k ^{p_2} \sum_{R\in \mathcal R _{k}} \lVert \gamma _{R}\rVert_{p_2} ^{p_2} 
\\ & \lesssim \lvert  S\rvert \sum_{k=1} ^{\infty } k ^{p_2} \tau^k  
\lesssim \lvert  S\rvert \,.  
\end{align}
This completes the analysis of the above form.


\section{The Diagonal Term} 

 One can can compare this argument to that of \cite{1011.1747}*{Section 8.1}.  
Before beginning the main thrust of the argument, 
a particular consequence of the martingale transform inequality is needed.  Using the notation of Theorem~\ref{t.LM}, set 
for $ j=1,2$,
\begin{align}\label{e.Box}
	\Box ^{j} _Q h := \lvert   D ^{j} _Q h \rvert +
	\begin{cases}
	\mathbf 1_{Q}  & \textup{a child of $ Q$ is in $ \mathcal S _{j}$} 
	\\
	0 & \textup{otherwise} 
 	\end{cases}
	\end{align}
where $ D ^{j} _{Q}$ is defined as in \eqref{e.D}, with 
$\mathcal{S}_0^j :=\pi_{\mathcal{S}_j}S$, terminal  cubes 
$\mathcal{T}^j:=\textup{ch}_{\mathcal{S}_j}(S_0^j)$, and
function $b^{j}:=b^{j} _{S_0^j}$.  Note that second summand accounts for the missing terminal cubes in 
definition of $ D ^{j}_Q$.  

By a randomization argument, the half-twisted inequality of Theorem~\ref{t.LM}, and the Carleson measure property of the cubes,  there holds 
\begin{equation} \label{e.Box<}
\Bigl\lVert 
\Bigl[
\sum_{\substack{Q \;:\;  Q\subset Q _0 }}   
(\Box ^{1}  _{Q} f ) ^2 
\Bigr] ^{1/2} 
\Bigr\rVert_{q}
\lesssim \lvert  Q_0\rvert ^{1/q}\,, \qquad 1< q < \infty  \,. 
\end{equation} 
The same inequality holds for $ g$.  

To control the diagonal term, it therefore suffices to show that 
\begin{equation*}
	\lvert  \langle T \Delta _Q f ,  \Delta _Q g\rangle \rvert 
	\lesssim (1 + \mathbf T _{\textup{loc}}) 
	\sum_{Q^1,Q^2\in\textup{ch}(Q)}
	\langle  \Box ^{1}_Q f\rangle_{Q^1} |Q| \langle \Box ^{2}_Q g \rangle_{Q^2} \,, \qquad Q\subset Q_0 \,. 
\end{equation*}
For cube $ Q$, and child $ Q^1$ of $ Q$, $  \Delta _Q f \mathbf 1_{Q^1}$ is either 
a multiple of $ b ^{1} _{\pi _{\mathcal S _{1}} Q} \mathbf 1_{Q^1}$, or, in the exclusive case that $ Q^1$ is also a stopping cube,  a linear combination of this function and $ b ^{1} _{Q^1} $.  In both cases, the coefficents in the linear combination are dominated by 
a constant times  $ \langle \Box ^{1}_Q f\rangle _{Q^1}$.   
Therefore, the control of the term above follows from this Lemma. 

\begin{lemma}\label{l.matrix_est}
Suppose that $Q\subset Q_0$. 
Then, if $Q^j$ is a children of $Q$
and $b^j\in\{b^j_{\pi_{\mathcal{S}_j}Q}, b_{Q^j}^j\}$ 
with $j=1,2$,
\[
	\big\lvert \langle T(b^1 \mathbf 1_{Q^1}),b^2\mathbf 1_{Q^2}\rangle \big\rvert \lesssim(1+\mathbf{T}_{\textup{loc}})\lvert Q\rvert.
\]
\end{lemma}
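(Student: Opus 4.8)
\textbf{Proof plan for Lemma~\ref{l.matrix_est}.}
The plan is to split according to whether the two children $Q^1,Q^2$ coincide or are disjoint, and in each case reduce to the local $Tb$ hypotheses together with crude size estimates. First I would dispose of the \emph{disjoint} case $Q^1\ne Q^2$. Here the perfectness of $T$ is decisive: since $b^1\mathbf 1_{Q^1}$ and $b^2\mathbf 1_{Q^2}$ are supported on disjoint dyadic cubes, if $b^2\mathbf 1_{Q^2}$ had integral zero the pairing would vanish. It does not vanish in general, but writing $b^2\mathbf 1_{Q^2} = (b^2\mathbf 1_{Q^2} - \langle b^2\rangle_{Q^2}\mathbf 1_{Q^2}) + \langle b^2\rangle_{Q^2}\mathbf 1_{Q^2}$, the first piece has integral zero so its pairing against $T(b^1\mathbf 1_{Q^1})$ is $0$ by perfectness; what remains is $\langle b^2\rangle_{Q^2}\,\langle T(b^1\mathbf 1_{Q^1}),\mathbf 1_{Q^2}\rangle$, and by the same trick applied on the $Q^1$ side, $T(b^1\mathbf 1_{Q^1})$ pairs with the constant $\mathbf 1_{Q^2}$ exactly as $\langle b^1\rangle_{Q^1}\mathbf 1_{Q^1}$ does, again using that $Q^1\cap Q^2=\varnothing$. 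Thus the whole pairing is $\langle b^1\rangle_{Q^1}\langle b^2\rangle_{Q^2}\langle T\mathbf 1_{Q^1},\mathbf 1_{Q^2}\rangle$, whose modulus is at most $\lvert\langle b^1\rangle_{Q^1}\rvert\,\lvert\langle b^2\rangle_{Q^2}\rvert\,\lvert Q^1\rvert \le \mathbf A^2\lvert Q\rvert$ by the size bound \eqref{e.size} (giving $\lvert K\rvert\le (\operatorname{dist}(Q^1,Q^2))^{-n}\lesssim \lvert Q\rvert^{-1}$ for sibling cubes) and the accretivity estimate $\lvert\langle b^j\rangle_{Q^j}\rvert\le \langle\lvert b^j\rvert\rangle_{Q^j}\le \mathbf A\lvert Q^j\rvert^{1/p_j}\lvert Q^j\rvert^{-1/p_j}=\mathbf A\cdot$ — more precisely $\lVert b^j\rVert_{p_j}\le \mathbf A\lvert Q\rvert^{1/p_j}$ or $\mathbf A\lvert Q^j\rvert^{1/p_j}$ gives $\lvert\langle b^j\rangle_{Q^j}\rvert\lesssim \mathbf A$ via Hölder.

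Next I would treat the \emph{equal} case $Q^1=Q^2=:R$, where $T$'s perfectness gives nothing and one must genuinely use the local $Tb$ hypothesis. Here $b^1\in\{b^1_{\pi_{\mathcal S_1}Q},\,b^1_R\}$ and $b^2\in\{b^2_{\pi_{\mathcal S_2}Q},\,b^2_R\}$, all supported in $R$. The pairing is $\langle T(b^1\mathbf 1_R),\,b^2\mathbf 1_R\rangle$. I would estimate it by Hölder with exponents $p_2'$ and $p_2$:
\[
\lvert\langle T(b^1\mathbf 1_R),b^2\mathbf 1_R\rangle\rvert \le \Bigl(\int_R \lvert T(b^1\mathbf 1_R)\rvert^{p_2'}\Bigr)^{1/p_2'} \lVert b^2\rVert_{p_2}.
\]
The second factor is $\le \mathbf A\lvert R\rvert^{1/p_2}\le \mathbf A\lvert Q\rvert^{1/p_2}$ by accretivity (whether $b^2=b^2_R$ or $b^2=b^2_{\pi_{\mathcal S_2}Q}$, noting in the latter case that $\lVert b^2_{\pi_{\mathcal S_2}Q}\mathbf 1_R\rVert_{p_2}\le \lVert b^2_{\pi_{\mathcal S_2}Q}\rVert_{p_2}\le\mathbf A\lvert\pi_{\mathcal S_2}Q\rvert^{1/p_2}$ — which is \emph{not} $\lesssim\lvert Q\rvert^{1/p_2}$, so instead I restrict and use that $b^2_{\pi_{\mathcal S_2}Q}$ was \emph{not} stopped at $R$, hence $\int_R\lvert b^2_{\pi_{\mathcal S_2}Q}\rvert^{p_2}\le \delta^{-1}\mathbf A^{p_2}\lvert R\rvert$ by failure of condition (2) in the construction of $\mathcal S_2$). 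For the first factor: if $b^1=b^1_{\pi_{\mathcal S_1}Q}$, then $R$ is a cube that did not trigger the stopping condition (3) for $\mathcal S_1$, hence $\int_R\lvert Tb^1_{\pi_{\mathcal S_1}Q}\rvert^{p_2'}\le \delta^{-1}\mathbf T_{\textup{loc}}^{p_2'}\lvert R\rvert$, giving the factor $\lesssim \mathbf T_{\textup{loc}}\lvert Q\rvert^{1/p_2'}$; if instead $b^1=b^1_R$, then $R$ is itself a stopping cube (an $\mathcal S_1$-child of $\pi_{\mathcal S_1}Q$, since that is the only way $b^1_{Q^1}$ appears in $\Delta_Q^1$), and the direct local $Tb$ hypothesis $\int_R\lvert Tb^1_R\rvert^{p_2'}\le \mathbf T_{\textup{loc}}^{p_2'}\lvert R\rvert$ applies. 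Either way the product is $\lesssim \mathbf T_{\textup{loc}}\lvert Q\rvert^{1/p_2'}\cdot\mathbf A\lvert Q\rvert^{1/p_2}=\mathbf A\mathbf T_{\textup{loc}}\lvert Q\rvert$.

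Combining the two cases yields $\lvert\langle T(b^1\mathbf 1_{Q^1}),b^2\mathbf 1_{Q^2}\rangle\rvert\lesssim_{\mathbf A}(1+\mathbf T_{\textup{loc}})\lvert Q\rvert$, as claimed. The main obstacle I anticipate is the bookkeeping in the equal case: one must carefully match each of the four possible combinations $(b^1,b^2)$ to the correct hypothesis — either the \emph{direct} local $Tb$ bound (when the relevant function is $b^j_R$ and $R$ is a stopping cube) or the \emph{failure of the stopping condition} (when the function is the inherited $b^j_{\pi_{\mathcal S_j}Q}$ and $R$ is \emph{not} a stopping cube) — and to notice that the inherited function's global $L^{p_j}$ norm is too large, so one must always exploit that $R$ was not stopped to get the local $L^{p_j}(R)$ control with the right power of $\lvert Q\rvert$. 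Once that dictionary is set up, every individual estimate is a one-line application of Hölder's inequality.
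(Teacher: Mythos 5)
Your disjoint case $Q^1\ne Q^2$ is essentially the paper's argument (the paper goes straight to ``$K$ is constant on $Q^2\times Q^1$'' rather than subtracting means twice, but the two routes give the identical formula $K(x_{Q^2},x_{Q^1})\int_{Q^1}b^1\int_{Q^2}b^2$), and the bound $\lvert\langle b^j\rangle_{Q^j}\rvert\lesssim\mathbf A$ is indeed correct once one invokes, as you note belatedly, the failure of stopping condition~(2) rather than the raw accretivity bound on $b^j_{\pi_{\mathcal S_j}Q}$.

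The equal case $Q^1=Q^2=R$, however, has a genuine gap. Your Hölder step requires a bound on $\int_R\lvert T(b^1\mathbf 1_R)\rvert^{p_2'}$, but you then quote the stopping-failure bound $\int_R\lvert Tb^1\rvert^{p_2'}\le\delta^{-1}\mathbf T_{\textup{loc}}^{p_2'}\lvert R\rvert$, which controls $T(b^1)$, not $T(b^1\mathbf 1_R)$. These differ by $T(b^1\mathbf 1_{R^c})$, which by perfectness is a \emph{constant} on $R$, but that constant is not $O(1)$: a straightforward estimate with the size condition \eqref{e.size} and the $L^{p_1}$ control on $b^1_{\pi_{\mathcal S_1}Q}$ coming from failure of stopping condition~(2) gives
\[
\Bigl\lvert T(b^1\mathbf 1_{R^c})\bigr\rvert_R\Bigr\rvert\lesssim\sum_{k\ge 0}\frac{1}{(2^k\ell R)^n}\int_{R^{(k+1)}\setminus R^{(k)}}\lvert b^1\rvert\lesssim\log\frac{\lvert \pi_{\mathcal S_1}Q\rvert}{\lvert R\rvert},
\]
which is unbounded, so $\int_R\lvert T(b^1\mathbf 1_R)\rvert^{p_2'}$ cannot be controlled by $\lvert R\rvert$ this way. (The same issue would arise if you instead wrote $\langle T(b^1\mathbf 1_R),b^2\mathbf 1_R\rangle=\langle Tb^1,b^2\mathbf 1_R\rangle-c\int_Rb^2$: again $c$ is the unbounded constant.) The paper's fix is precisely designed to sidestep this: write $b^2\mathbf 1_R=\langle b^2\rangle_R\,b^2_R+B^2$ with $B^2$ supported in $R$ and mean zero. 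Against the mean-zero $B^2$, perfectness gives $\langle T(b^1\mathbf 1_R),B^2\rangle=\langle Tb^1,B^2\rangle$ (the offending constant is annihilated), and \emph{now} H\"older with $\lVert Tb^1\rVert_{L^{p_2'}(R)}$ applies. The other piece $\langle b^2\rangle_R\,b^2_R$ is not mean zero, so one cannot remove $\mathbf 1_R$; instead one dualizes, $\langle T(b^1\mathbf 1_R),b^2_R\rangle=\langle b^1\mathbf 1_R,T^*b^2_R\rangle$, and uses the $T^*$ local $Tb$ hypothesis on $b^2_R$ together with the controlled $L^{p_1}(R)$ norm of $b^1\mathbf 1_R$. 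Both moves — subtracting the local accretive function to create mean zero, and dualizing to reach $T^*$ — are missing from your plan, and both are needed.
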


\begin{proof}
Let us consider the case  $b^j=b^j_{\pi_{\mathcal{S}_j}Q}$, $j=1,2$. The other
cases are similar but easier.
Suppose first that $Q^1\not=Q^2$. Then, since $T$ is perfect, we see that $K$ is constant on $Q^2\times Q^1$.
Hence, by denoting the midpoint of $Q^j$ by $x_{Q^j}$,
\begin{align*}
\big\lvert \langle T(b^1 \mathbf 1_{Q^1}),b^2\mathbf 1_{Q^2}\rangle\big\rvert
&=\lvert K(x_{Q^2},x_{Q^1})\rvert \cdot \bigg\lvert \int_{Q^1} b^1_{\pi_{\mathcal{S}_1}Q}(y)\,dy\bigg\rvert\cdot \bigg\lvert
\int_{Q^2} b^2_{\pi_{\mathcal{S}_2}Q}(x)\,dx\bigg\rvert\\
&\lesssim |Q|.
\end{align*}
In the last step we used the kernel size estimate.

Then we suppose that $Q^1=Q^2$.
We let $b_{Q^1}^2$ be the $p_2$-accretive
function, associated with the cube $Q^1$.
It suffices to estimate the following terms,
\begin{equation}\label{e.t_split}
\big\lvert \langle T(b^1 \mathbf 1_{Q^1}),\langle b^2\rangle_{Q^1} b_{Q^1}^2\rangle\big\rvert
+\big\lvert \langle T(b^1 \mathbf 1_{Q^1}),b^2\mathbf 1_{Q^1}-\langle b^2\rangle_{Q^1} b_{Q^1}^2\rangle\big\rvert\,.
\end{equation}
The first term is bounded by
\begin{align*}
\lvert \langle b^2\rangle_{Q^1}\rvert\cdot
\big\lvert \langle b^1 \mathbf 1_{Q^1},T^*(b_{Q^1}^2)\rangle\big\rvert
\lesssim \mathbf{T}_{\textup{loc}}\lvert Q\rvert\,.
\end{align*}
Here we used H\"older's inequality, with both exponents $p_2$ and $p_1$. 
To estimate the second term in \eqref{e.t_split}, the crucial step
is to remove the characteristic function
$\mathbf{1}_{Q^1}$ from within $T(b^1\mathbf{1}_{Q^1})$.
For this purpose, let us observe that the function $B^2:=b^2\mathbf 1_{Q^1}-\langle b^2\rangle_{Q^1} b_{Q^1}^2$
is supported on $Q^1$ and it has zero integral. By assumption
that $T$ is perfect,
\begin{align*}
\big\lvert \langle T(b^1 \mathbf 1_{Q^1}),B^2\rangle \big\rvert
=\big\lvert \langle T(b^1),B^2\rangle \big\rvert\lesssim \mathbf{T}_{\textup{loc}}|Q|.
\end{align*}
In the last step, we split the dual form
in two other forms and use H\"older's inequality, with exponent $p_2$, for
the individual forms separately.
\end{proof}

\begin{bibsection}
\begin{biblist}
\bib{MR1934198}{article}{
  author={Auscher, P.},
  author={Hofmann, S.},
  author={Muscalu, C.},
  author={Tao, T.},
  author={Thiele, C.},
  title={Carleson measures, trees, extrapolation, and $T(b)$ theorems},
  journal={Publ. Mat.},
  volume={46},
  date={2002},
  number={2},
  pages={257--325},
}

\bib{MR1933726}{article}{
  author={Auscher, Pascal},
  author={Hofmann, Steve},
  author={Lacey, Michael},
  author={McIntosh, Alan},
  author={Tchamitchian, Ph.},
  title={The solution of the Kato square root problem for second order elliptic operators on ${\Bbb R}\sp n$},
  journal={Ann. of Math. (2)},
  volume={156},
  date={2002},
  number={2},
  pages={633--654},
}


\bib{1011.1747}{article}{
  author={Auscher, P.},
  author={Routin, E.},
  title={Local Tb theorems and Hardy inequalities},
  eprint={http://www.arxiv.org/abs/1011.1747},
  journal={J. Geometric Anal., to appear},
  date={2011},
}

\bib{MR2474120}{article}{
  author={Auscher, Pascal},
  author={Yang, Qi Xiang},
  title={BCR algorithm and the $T(b)$ theorem},
  journal={Publ. Mat.},
  volume={53},
  date={2009},
  number={1},
  pages={179--196},
}

\bib{MR1085827}{article}{
   author={Beylkin, G.},
   author={Coifman, R.},
   author={Rokhlin, V.},
   title={Fast wavelet transforms and numerical algorithms. I},
   journal={Comm. Pure Appl. Math.},
   volume={44},
   date={1991},
   number={2},
   pages={141--183},
}

\bib{MR1096400}{article}{
  author={Christ, Michael},
  title={A $T(b)$ theorem with remarks on analytic capacity and the Cauchy integral},
  journal={Colloq. Math.},
  volume={60/61},
  date={1990},
  number={2},
  pages={601--628},
}

\bib{MR986825}{article}{
  author={Coifman, R. R.},
  author={Jones, Peter W.},
  author={Semmes, Stephen},
  title={Two elementary proofs of the $L^2$ boundedness of Cauchy integrals on Lipschitz curves},
  journal={J. Amer. Math. Soc.},
  volume={2},
  date={1989},
  number={3},
  pages={553--564},
  issn={0894-0347},
  review={\MR {986825 (90k:42017)}},
  doi={10.2307/1990943},
}

\bib{MR763911}{article}{
  author={David, Guy},
  author={Journ{\'e}, Jean-Lin},
  title={A boundedness criterion for generalized Calder\'on-Zygmund operators},
  journal={Ann. of Math. (2)},
  volume={120},
  date={1984},
  number={2},
  pages={371--397},
}

\bib{MR1110189}{article}{
   author={Figiel, Tadeusz},
   title={Singular integral operators: a martingale approach},
   conference={
      title={Geometry of Banach spaces},
      address={Strobl},
      date={1989},
   },
   book={
      series={London Math. Soc. Lecture Note Ser.},
      volume={158},
      publisher={Cambridge Univ. Press},
      place={Cambridge},
   },
   date={1990},
   pages={95--110},
   review={\MR{1110189 (94e:42015)}},
}

\bib{MR1933725}{article}{
  author={Hofmann, Steve},
  author={Lacey, Michael},
  author={McIntosh, Alan},
  title={The solution of the Kato problem for divergence form elliptic operators with Gaussian heat kernel bounds},
  journal={Ann. of Math. (2)},
  volume={156},
  date={2002},
  number={2},
  pages={623--631},
}

\bib{MR2664559}{article}{
  author={Hofmann, Steve},
  title={Local $T(b)$ theorems and applications in PDE},
  conference={ title={Harmonic analysis and partial differential equations}, },
  book={ series={Contemp. Math.}, volume={505}, publisher={Amer. Math. Soc.}, place={Providence, RI}, },
  date={2010},
  pages={29--52},
  review={\MR {2664559 (2011e:42024)}},
}

\bib{MR2275650}{article}{
  author={Hofmann, Steve},
  title={Local $Tb$ theorems and applications in PDE},
  conference={ title={International Congress of Mathematicians. Vol. II}, },
  book={ publisher={Eur. Math. Soc., Z\"urich}, },
  date={2006},
  pages={1375--1392},
  review={\MR {2275650 (2007k:42030)}},
}

\bib{MR2657437}{article}{
  author={Lacey, Michael T.},
  author={Petermichl, Stefanie},
  author={Reguera, Maria Carmen},
  title={Sharp $A_2$ inequality for Haar shift operators},
  journal={Math. Ann.},
  volume={348},
  date={2010},
  number={1},
  pages={127--141},
}

\bib{MR2653055}{article}{
  author={Lacey, Michael T.},
  author={Sawyer, Eric T.},
  author={Uriarte-Tuero, Ignacio},
  title={Astala's conjecture on distortion of Hausdorff measures under quasiconformal maps in the plane},
  journal={Acta Math.},
  volume={204},
  date={2010},
  number={2},
  pages={273--292},
}

\bib{1108.2319}{article}{
  author={Lacey, Michael T.},
  author={Sawyer, Eric T.},
  author={Uriarte-Tuero, Ignacio},
  author={Shen, Chun-Yen},
  title={The Two Weight Inequality for Hilbert Transform, Coronas, and Energy Conditions},
  eprint={http://www.arxiv.org/abs/1108.2319},
}

\bib{1201.4319}{article}{
  author={Lacey, Michael T.},
  author={Sawyer, Eric T.},
  author={Uriarte-Tuero, Ignacio},
  author={Shen, Chun-Yen},
  title={Two Weight Inequality for the Hilbert Transform: A Real Variable Characterization },
  eprint={http://www.arxiv.org/abs/1201.4319 },
}

\bib{av-cont}{article}{
  author={Lacey, Michael T},
  author={V\"ah\"akangas, Antti V.},
  title={On the  Local $ Tb$ Theorem in the Duality Range},
  date={2012},
}

\bib{MR1887641}{article}{
  author={Muscalu, Camil},
  author={Tao, Terence},
  author={Thiele, Christoph},
  title={Multi-linear operators given by singular multipliers},
  journal={J. Amer. Math. Soc.},
  volume={15},
  date={2002},
  number={2},
  pages={469--496 (electronic)},
}

\bib{0911.4387}{article}{
  author={Hyt\"onen, Tuomas},
  author={Martikainen, Henri},
  title={Non-homogeneous Tb theorem and random dyadic cubes on metric measure spaces },
  eprint={arxiv:0911.4387},
}

\bib{1011.0642}{article}{
  author={Hyt\"onen, Tuomas},
  author={Martikainen, Henri},
  title={On general local $Tb$ theorems},
  journal={Trans. Amer. Math. Soc.},
  date={to appear},
  eprint={arxiv:1011.0642},
}

\bib{1201.0648}{article}{
  author={Hyt\"onen, Tuomas P.},
  author={V\"ah\"akangas, Antti V.},
  title={The local non-homogeneous $Tb$ theorem for vector-valued functions},
  eprint={arxiv:1201.0648},
  date={2012},
}

\bib{MR2666228}{article}{
  author={Salomone, Stephanie Anne},
  title={$b$-weighted dyadic BMO from dyadic BMO and associated $T(b)$ theorems},
  journal={Collect. Math.},
  volume={61},
  date={2010},
  number={2},
  pages={151--171},
}

\bib{MR1232192}{book}{
  author={Stein, Elias M.},
  title={Harmonic analysis: real-variable methods, orthogonality, and oscillatory integrals},
  series={Princeton Mathematical Series},
  volume={43},
  note={With the assistance of Timothy S. Murphy; Monographs in Harmonic Analysis, III},
  publisher={Princeton University Press},
  place={Princeton, NJ},
  date={1993},
  pages={xiv+695},
}

\end{biblist}
\end{bibsection}

\end{document}